\newtheorem{thm}{Theorem}[section]
\newtheorem{coro}[thm]{Corollary}
\newtheorem{propo}[thm]{Proposition}
\newtheorem{lem}[thm]{Lemma}
\theoremstyle{remark}
\theoremstyle{definition}
\newcommand{\imply}{\ensuremath{\rightarrow}}
\newcommand{\embeds}{\ensuremath{\hookrightarrow}}
\newcommand{\yy}{\ensuremath{\wedge}}
\newcommand{\oo}{\ensuremath{\vee}}
\newcommand{\RR}{\ensuremath{\mathbb{R}}}
\newcommand{\HH}{\ensuremath{\mathcal{H}}}
\newcommand{\menos}{\symbol{92}}
\newcommand{\diam}{\ensuremath{\text{diam}}}
\begin{document}

\title{On Newton-Sobolev spaces}

\vskip 0.3 truecm

\author{Miguel Andrés Marcos \thanks{The author was supported by Consejo Nacional
de Investigaciones Científicas y Técnicas, Agencia Nacional de Promoción Científica y Tecnológica and Universidad
Nacional del Litoral.\newline \indent Keywords and phrases:
Newton-Sobolev spaces, Spaces of homogeneous type, Poincaré inequality, upper gradients
\newline \indent 2010 Mathematics Subject Classification: Primary
43A85.\newline }}
\affil{\footnotesize{Instituto de Matemática Aplicada del Litoral (CONICET-UNL)\\ Departamento de Matemática (FIQ-UNL)}}

\date{\vspace{-0.5cm}}

\maketitle 

\begin{abstract}
Newton-Sobolev spaces, as presented by N. Shanmugalingam, describe a way to extend Sobolev spaces to the metric setting via upper gradients, for metric spaces with `sufficient' paths of finite length. Sometimes, as is the case of parabolic metrics, most curves are non-rectifiable. As a course of action to overcome this problem, we generalize some of these results to spaces where paths are not necessarily measured by arc length. In particular, we prove the Banach character of the space and the absolute continuity of these Sobolev functions over curves. Under the assumption of a Poincaré-type inequality and an arc-chord property here defined, we obtain the density of some Lipschitz classes, relate Newton-Sobolev spaces to those defined by Haj\symbol{170}asz by means of Haj\symbol{170}asz gradients, and we also get some Sobolev embedding theorems. Finally, we illustrate some non-standard settings where these conditions hold, specifically by adding a weight to arc-length and specifying some conditions over it.
\end{abstract}

\setlength{\parskip}{10pt}

\section{Introduction}

If $\Omega$ is an open set in $\RR^n$ and $f$ is a smooth function defined on $\Omega$, the Fundamental Theorem of Calculus for line integrals implies that for every piecewise smooth path $\gamma$ in $\Omega$ with endpoints $x,y$ we get
\begin{align*}|f(x)-f(y)|\leq \int_\gamma |\nabla f|d|s|.\end{align*}
Nonnegative functions defined in $\Omega$ that satisfy this inequality for every $x,y$ and every $\gamma$ joining them in place of $|\nabla f|$ are referred to as \textsl{upper gradients} (see for example \cite{HeK}).

In the case $\Omega=\RR^n$, one can consider only segments parallel to the coordinate axes instead of more general paths, and those are sufficient to describe partial derivatives and through them gradients. The same can be done if we consider a rotation of these segments, as the Euclidean metric is invariant under rotations, and the same holds for path length. This is not true in a more general setting such as $\RR^2$ with the parabolic metric defined further along this section, where only horizontal segments are rectifiable.

In \cite{Sh}, N. Shanmugalingam describes, via upper gradients, a way to characterize Sobolev spaces $W^{1,p}$ in open sets of $\RR^n$ that extends to metric measure spaces, defining Newton-Sobolev spaces $N^{1,p}$. If the space has `sufficient' rectifiable paths (in the sense that the set of rectifiable paths has nonzero $p$-modulus), an interesting theory of Sobolev functions can be developed, but if the set of rectifiable paths is negligible, this `Sobolev space' is just $L^p$.

Easy enough examples of metric measure spaces with no paths of dimension 1 can be constructed. For instance, take $X=\RR$ with $d(x,y)=|x-y|^{1/2}$, and we get that paths are either 0-dimensional (trivial paths) or 2-dimensional. While `classical' Newton-Sobolev theory in such a space would be nonsensical, a good theory could be developed if we measured path `length' by Hausdorff 2-dimensional measure $\HH^2$ with respect to the new distance $d$. Of course, $\HH^2_d$ coincides with $\HH^1$ with respect to the Euclidean distance, and the above example seems to be just a change of parameters. 

In a more interesting scenario, we consider parabolic metrics associated to a matrix, see for instance \cite{Gu}. Take an $n\times n$ diagonal matrix $D$ with eigenvalues $\alpha_1,\ldots, \alpha_n\geq 1$. For $x\in \RR^n$ and $\lambda>0$, we define
\begin{align*}T_\lambda x=e^{D\log\lambda}x=\left(
\begin{array}{ccc}\lambda^{\alpha_1} & & \makebox(0,0){\text{\LARGE{0}}}\\& \ddots\\\text{\LARGE{0}}& & \lambda^{\alpha_n}\end{array}\right)\left(
\begin{array}{c}x_1\\ \vdots\\ x_n\end{array}\right).\end{align*}

For a norm $\|\cdot\|$ in $\RR^n$ it can be shown that for $x\neq 0$, $\|T_\lambda x\|$ is continuous, strictly increasing in $\lambda$, tends to 0 as $\lambda\imply 0$ and tends to $\infty$ as $\lambda\imply\infty$. Then there exists a unique $0<\rho(x)<\infty$ such that $\|T_{1/\rho(x)}x\|=1$. If we define
\begin{align*}d(x,y)=\rho(x-y)\end{align*}
for $x\neq y$ and $d(x,x)=0$, then $d$ is a traslation invariant metric that also satisfies $d(T_\lambda x,T_\lambda y)=\lambda d(x,y)$ and $d(x,y)=1$ iff $|x-y|=1$, $d(x,y)<1$ iff $|x-y|<1$, $d(x,y)>1$ iff $|x-y|>1$. These metrics thus defined can have different Hausdorff dimensions, see \cite{A}.

The word parabolic refers to the case $\alpha_1=\ldots=\alpha_{n-1}=1$ and $\alpha_n=2$, which provides the right dilations for the heat equation and other partial differential equations of parabolic type (see \cite{Fa}). For example, if we consider $\RR^2$ with $D=\left(
\begin{array}{cc}1 & 0\\ 0 & 2\end{array}\right)$ and the maximum norm, we obtain
\begin{align*}d\left((x,y),(x',y')\right)=\max\left\{|x-x'|,|y-y'|^{1/2}\right\},\end{align*}
and it can be shown balls have Hausdorff dimension 3 (in fact they are Ahlfors 3-regular). Here, the only non-trivial rectifiable paths are horizontal segments, so even though \textsl{there are} rectifiable paths, the space is not connected by them. Smooth non-horizontal paths have Hausdorff dimension 2, so we see that this measure is not rotation invariant.

As another example of heterogeneity, we can consider adding a weight $\omega$ to arc-length by using the measure $d\mu=\omega d\HH^1$. In this case this path measure will not necessarily be invariant under any kind of isometry.

In this work, following the ideas in \cite{Sh}, we develop a more general theory of Newton-Sobolev spaces by replacing Hausdorff 1-dimensional measure by an arbitrary measure $\mu$ as a way of measuring path `lengths'.

In sections 2 and 3 we generalize all the machinery needed to construct Newton-Sobolev spaces. In section 4 we define these spaces and prove they are complete. In section 5 we call for some additional properties, such as Poincaré inequality, needed to prove some more interesting results, as Lipschitz density or Sobolev embeddings. We also compare Newton-Sobolev spaces with another kind of Sobolev space in metric spaces: Haj\symbol{170}asz-Sobolev spaces.

\section{$\mu$-arc length and upper gradients}

Classical definitions of arc length, length function, arc length parametrization and line integrals in the metric setting can be found in \cite{He}. In this section we modify these concepts so they apply in more general ways to measure path `lengths'.

Given a metric space $(X,d)$ and a (compact) path $\gamma:[a,b]\imply X$, i.e. a continuous function from $[a,b]$ into $X$, its length is defined as
\begin{align*}l(\gamma)=\sup_{(t_i)_i}\sum_i d(\gamma(t_i),\gamma(t_{i+1})),\end{align*}
where the supremum is taken over all partitions of $[a,b]$. We say that $\tilde{\gamma}$ is a sub-path of $\gamma$ if it is the restriction of of $\gamma$ to a subinterval of $[a,b]$. We say that a path (or subpath) is trivial if it is a constant path (for injective paths this means $a=b$).

The concept of arc length of a path is similar to, but not equal to, Hausdorff one-dimensional measure $\HH^1$ of its image, but they do coincide for injective paths (see \cite{Fl}). From this result, for injective paths and for Borel nonnegative measurable functions we get that
\begin{align*}\int_\gamma g ds= \int_{ Im(\gamma)}gd\HH^1,\end{align*}
where $d\sigma$ is arc-length, and from this we can think of exchanging the measure $\HH^1$ for another Borel measure, as $\HH^s$.

Let $\mu$ be a non-atomic Borel measure in $X$ (in the sense that $\mu(\{x\})=0$ for each $x\in X$). Define $\Gamma^\mu$ as the set of all non trivial injective paths $\gamma$ in $X$ such that $0<\mu( Im(\tilde{\gamma}))<\infty$ for all non trivial subpaths of $\gamma$. For nonnegative Borel functions $g:X\imply[0,\infty]$ we define
\begin{align*}\int_\gamma g = \int_{ Im(\gamma)}gd\mu.\end{align*}

Now, for a path $\gamma:[a,b]\imply X$ in $\Gamma^\mu$, we define $h(\gamma)=\mu( Im(\gamma))$ and its \textbf{$\mu$-arc length} $\nu_\gamma:[a,b]\imply\RR$ as
\begin{align*}\nu_\gamma(x)=h(\gamma|_{[a,x]}).\end{align*}

\begin{lem}For paths $\gamma:[a,b]\imply X$ in $\Gamma^\mu$, we have that $\nu_\gamma$ is strictly increasing, continuous, onto $[0,h(\gamma)]$, and besides
\begin{align*}h(\gamma)=h(\gamma|_{[a,x]})+h(\gamma|_{[x,b]}).\end{align*}\end{lem}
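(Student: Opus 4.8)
The plan is to establish the additivity identity first, since the other three properties follow from it together with the defining property of $\Gamma^\mu$ and the non-atomicity of $\mu$. Because $\gamma$ is injective, the images $Im(\gamma|_{[a,x]})$ and $Im(\gamma|_{[x,b]})$ meet only at the single point $\gamma(x)$. Writing $Im(\gamma)$ as the union of these two images and using finite additivity of $\mu$, the overlap contributes $\mu(\{\gamma(x)\})$, which vanishes since $\mu$ is non-atomic; this gives $h(\gamma)=h(\gamma|_{[a,x]})+h(\gamma|_{[x,b]})$. The same argument applied to any subinterval shows $h(\gamma|_{[a,y]})=h(\gamma|_{[a,x]})+h(\gamma|_{[x,y]})$ for $a\le x\le y\le b$.

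Strict monotonicity is then immediate: for $x<y$ we have $\nu_\gamma(y)-\nu_\gamma(x)=h(\gamma|_{[x,y]})=\mu(Im(\gamma|_{[x,y]}))$, and since $\gamma|_{[x,y]}$ is a non-trivial subpath of $\gamma\in\Gamma^\mu$, this quantity is strictly positive by the definition of $\Gamma^\mu$.

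The main obstacle is continuity, which I would prove by separately establishing right- and left-continuity through continuity of measure. For right-continuity at $x$, take $x_n\downarrow x$; the sets $Im(\gamma|_{[a,x_n]})$ decrease, and using injectivity together with continuity of $\gamma$ one checks that $\bigcap_n Im(\gamma|_{[a,x_n]})=Im(\gamma|_{[a,x]})$ (an element of the intersection corresponds by injectivity to a single parameter $t$ with $t\le x_n$ for all $n$, hence $t\le x$). Since $Im(\gamma|_{[a,x_1]})$ has finite measure --- this is where the finiteness in the definition of $\Gamma^\mu$ is used --- continuity of $\mu$ from above yields $\nu_\gamma(x_n)\to\nu_\gamma(x)$. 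For left-continuity, take $x_n\uparrow x$; now the increasing union satisfies $\bigcup_n Im(\gamma|_{[a,x_n]})=Im(\gamma|_{[a,x)})$, which differs from $Im(\gamma|_{[a,x]})$ only by the point $\gamma(x)$. Continuity of $\mu$ from below gives $\nu_\gamma(x_n)\to\mu(Im(\gamma|_{[a,x)}))$, and non-atomicity (together with injectivity, so that $\gamma(x)\notin Im(\gamma|_{[a,x)})$) shows this equals $\nu_\gamma(x)$.

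Finally, the endpoints give $\nu_\gamma(a)=\mu(\{\gamma(a)\})=0$ and $\nu_\gamma(b)=h(\gamma)$, so continuity and the intermediate value theorem show $\nu_\gamma$ maps $[a,b]$ onto $[0,h(\gamma)]$. The delicate points to watch are the two set-theoretic identities for the monotone limits of images --- both of which rely essentially on injectivity --- and the need to invoke finiteness of $\mu$ on the initial image before applying continuity of measure from above.
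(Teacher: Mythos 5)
Your proof is correct and takes essentially the same route as the paper's, which is a terse version of the same argument: monotonicity is immediate, strict monotonicity comes from positivity of $\mu$ on non-trivial subpaths, continuity from non-atomicity, and surjectivity from continuity plus the intermediate value theorem; your contribution is to fill in the measure-theoretic mechanism (continuity of $\mu$ from above and below on the monotone families of images, with injectivity and finiteness doing the work). If anything, your write-up is more complete than the paper's, since you also verify the additivity identity $h(\gamma)=h(\gamma|_{[a,x]})+h(\gamma|_{[x,b]})$, which is stated in the lemma but never addressed in the paper's proof.
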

\begin{proof}
$\nu_\gamma$ is clearly increasing. Continuity follows from $\mu$ being non-atomic, and surjectivity follows from it being continuous and increasing.
The fact that $\nu_\gamma$ is strictly increasing follows from the fact that every non trivial subcurve of $\gamma$ has positive measure, as $\gamma\in \Gamma^\mu$.\end{proof}

\begin{thm}For $\gamma:[a,b]\imply X$ in $\Gamma^\mu$, there is a unique $\gamma_h:[0,h(\gamma)]\imply X$ such that
\begin{align*}\gamma=\gamma_h\circ\nu_\gamma,\end{align*}
$ Im(\gamma)=Im(\gamma_h)$ and $\nu_{(\gamma_h)}(t)=t$ in $[0,h(\gamma)]$ (therefore $\gamma_h=\gamma_h\circ\nu_{\gamma_h}$). We call this the \textbf{$\mu$-arc length parametrization} of $\gamma$.\end{thm}
\begin{proof} As $\nu_\gamma:[a,b]\imply[0,h(\gamma)]$ is strictly increasing and onto, it is a bijection between $[a,b]$ and $[0,h(\gamma)]$ and we can define
\begin{align*}\gamma_h=\gamma\circ\nu^{-1}_\gamma.\end{align*}
We immediately see that $ Im(\gamma)=Im(\gamma_h)$, and
\begin{align*}\nu_{(\gamma_h)}(t) &=\mu(\gamma_h([0,t]))=\mu(\gamma(\nu^{-1}_\gamma([0,t])))=\mu(\gamma([a,\nu^{-1}_\gamma(t)]))\\
&=\nu_\gamma(\nu^{-1}_\gamma(t))=t.\end{align*}\end{proof}

\begin{thm}If $\gamma:[0,h]\imply X$ is a path in $\Gamma^\mu$ parametrized by $\mu$-arc length, then for every Borel set $B$ of $[0,h]$, we have 
\begin{align*}\mu(\gamma(B))=l(B).\end{align*}
Furthermore, if $g:X\imply\RR$ is nonnegative and Borel measurable, then for each subpath $\tilde{\gamma}=\gamma|_{[a,b]}$ we have
\begin{align*}\int_{\tilde{\gamma}}g=\int_a^b g\circ\tilde{\gamma}.\end{align*}\end{thm}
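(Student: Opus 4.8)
The plan is to prove both assertions by identifying the image (pushforward) of Lebesgue measure under $\gamma$ with the restriction of $\mu$ to $\mathrm{Im}(\gamma)$ (I read $l$ in the statement as Lebesgue measure on $[0,h]$): the first equality is exactly this identification tested on Borel sets, and the integral formula is then the abstract change-of-variables formula for image measures.

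First I would check that the set function $\tilde{\mu}(B):=\mu(\gamma(B))$ is a well-defined Borel measure on $[0,h]$. Since $\gamma$ is a continuous injection from the compact interval $[0,h]$, it is a homeomorphism onto its image (a continuous bijection from a compact space into a Hausdorff space), so $\gamma$ carries Borel sets to Borel sets and $\gamma(B)$ is genuinely $\mu$-measurable. Countable additivity of $\tilde{\mu}$ then follows from that of $\mu$ together with the injectivity of $\gamma$, which sends disjoint unions in $[0,h]$ to disjoint unions in $X$. Next I would show $\tilde{\mu}$ agrees with Lebesgue measure on intervals: for $[a,b]\subseteq[0,h]$, the splitting identity $h(\gamma)=h(\gamma|_{[a,x]})+h(\gamma|_{[x,b]})$ from the first lemma (applied to the cut at $a$ inside $[0,b]$), combined with the parametrization property $\nu_\gamma(t)=t$, i.e. $h(\gamma|_{[0,t]})=t$, gives $\tilde{\mu}([a,b])=\mu(\gamma([a,b]))=h(\gamma|_{[a,b]})=b-a$; the single overlap point $\gamma(a)$ is harmless since $\mu$ is non-atomic. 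Because the closed subintervals form a $\pi$-system generating the Borel $\sigma$-algebra and both $\tilde{\mu}$ and $l$ are finite with total mass $h$, the uniqueness theorem for measures agreeing on a generating $\pi$-system forces $\tilde{\mu}=l$ on all Borel sets, which is the first claim.

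For the second assertion, the computation above says precisely that $\gamma$ pushes Lebesgue measure on $[a,b]$ forward to $\mu$ restricted to $\mathrm{Im}(\tilde{\gamma})=\gamma([a,b])$: for Borel $A\subseteq\mathrm{Im}(\tilde{\gamma})$, injectivity gives $\gamma(\gamma^{-1}(A))=A$, so $(\gamma_{\#}\,l)(A)=l(\gamma^{-1}(A))=\mu(\gamma(\gamma^{-1}(A)))=\mu(A)$ by the first part. The change-of-variables formula for image measures then yields, for nonnegative Borel $g$,
\[
\int_{\tilde{\gamma}}g=\int_{\mathrm{Im}(\tilde{\gamma})}g\,d\mu=\int_{[a,b]}(g\circ\gamma)\,dl=\int_a^b g\circ\tilde{\gamma}.
\]

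I expect the crux of the whole proof to lie in the first identity, and specifically in its two measure-theoretic subtleties: verifying that $\gamma$ sends Borel sets to Borel sets so that $\tilde{\mu}$ is even defined (the homeomorphism-onto-image argument), and promoting agreement on intervals to agreement on all Borel sets through a uniqueness theorem. The integral identity is then essentially automatic once the image-measure identity is established; of the two, I would expect the Borel-measurability point to be the one most easily overlooked.
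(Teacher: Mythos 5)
Your proof is correct and complete. Note that the paper states this theorem \emph{without} proof --- it is presented as the $\mu$-measure analogue of the classical identity for injective curves parametrized by arc length --- so there is no argument of the author's to compare yours against; what you have written is a legitimate way of filling in that omitted argument. Your route is the natural one: establish that $B\mapsto\mu(\gamma(B))$ is a genuine Borel measure on $[0,h]$ (using that a continuous injection on a compact interval is a homeomorphism onto its image, so images of Borel sets are Borel, and injectivity gives countable additivity), identify it with Lebesgue measure on closed subintervals via the additivity statement of the paper's first lemma together with $\nu_\gamma(t)=t$, upgrade to all Borel sets by the uniqueness theorem for finite measures agreeing on a generating $\pi$-system, and then read off the integral identity from the change-of-variables formula for pushforward measures. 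All the points where such an argument could silently fail --- Borel measurability of $\gamma(B)$, finiteness of the two measures (which comes from $\gamma\in\Gamma^\mu$), and the harmlessness of the overlap point $\gamma(a)$ under the non-atomicity of $\mu$ --- are addressed explicitly and correctly.
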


Finally, we get the same result as with rectifiable curves.

\begin{thm}\label{ac}Given a function $f:X\imply\RR$ and a path $\gamma:[0,h]\imply X$ in $\Gamma^\mu$ parametrized by $\mu$-arc length, if there exists a Borel measurable nonnegative $\rho:X\imply\RR$ satisfying
\begin{align*}|f(\gamma(s))-f(\gamma(t))|\leq\int_{\gamma|_{[s,t]}}\rho<\infty\end{align*}
for every $0\leq s<t\leq h$, then $f\circ\gamma:[0,h]\imply\RR$ is absolutely continuous.\end{thm}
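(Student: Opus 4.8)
The plan is to transfer the hypothesis, which is phrased in terms of the measure $\mu$ along the image of $\gamma$, into a purely one-dimensional statement about a Lebesgue integral on $[0,h]$, and then to invoke the absolute continuity of the Lebesgue integral. Write $u=f\circ\gamma$ and $g=\rho\circ\gamma$. Since $\gamma$ is continuous and $\rho$ is nonnegative and Borel measurable, $g$ is a nonnegative Borel function on $[0,h]$. By the integral identity of the preceding theorem, namely $\int_{\tilde\gamma}\rho=\int_a^b\rho\circ\tilde\gamma$ for each subpath $\tilde\gamma=\gamma|_{[a,b]}$, the hypothesis becomes
\begin{align*}|u(s)-u(t)|\leq\int_s^t g(\tau)\,d\tau\end{align*}
for all $0\leq s<t\leq h$. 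Taking $s=0$ and $t=h$, and using that $\int_{\gamma|_{[0,h]}}\rho<\infty$, shows that $g\in L^1([0,h])$.

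Next I would recall that the integrability of $g$ yields the absolute continuity of its indefinite integral: for every $\varepsilon>0$ there is a $\delta>0$ such that $\int_E g<\varepsilon$ whenever $E\subset[0,h]$ is measurable with Lebesgue measure $|E|<\delta$. This is the standard absolute continuity of the integral for an $L^1$ function, and it is the only analytic input still needed.

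Finally, to verify the definition of absolute continuity for $u$ directly, fix $\varepsilon>0$ and take the corresponding $\delta$. Given any finite family of pairwise disjoint intervals $(s_i,t_i)\subset[0,h]$ with $\sum_i(t_i-s_i)<\delta$, set $E=\bigcup_i(s_i,t_i)$, so that $|E|=\sum_i(t_i-s_i)<\delta$. Summing the displayed inequality over $i$ and using disjointness of the intervals,
\begin{align*}\sum_i|u(t_i)-u(s_i)|\leq\sum_i\int_{s_i}^{t_i}g=\int_E g<\varepsilon,\end{align*}
which is precisely the statement that $u=f\circ\gamma$ is absolutely continuous.

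The argument is short because the genuinely substantive step, identifying the path integral $\int_{\gamma|_{[s,t]}}\rho$ with the ordinary Lebesgue integral $\int_s^t\rho\circ\gamma$, has already been carried out in the preceding theorem via the $\mu$-arc length parametrization. The only place that requires a little care is checking that $g=\rho\circ\gamma$ is genuinely Borel measurable and integrable, so that the absolute continuity of the integral actually applies; once the change-of-variables identity is in hand, what remains is the classical $\varepsilon$--$\delta$ estimate above, and I do not expect any further obstacle.
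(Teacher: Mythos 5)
Your proof is correct and follows essentially the same route as the paper: both arguments rest on the absolute continuity of the integral of $\rho$, transferred through the measure-preserving property of the $\mu$-arc length parametrization established in the preceding theorem. The only difference is cosmetic --- you pull the hypothesis back to a Lebesgue integral on $[0,h]$ via the change-of-variables identity, whereas the paper works directly with $\int\rho\,d\mu$ on $Im(\gamma)$ and uses $\mu(\gamma([a_i,b_i]))=b_i-a_i$; these are two phrasings of the same argument.
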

\begin{proof}
Let $\epsilon>0$. As $\rho\in L^1( Im(\gamma),\mu)$, by absolute continuity of the integral there exists $\delta>0$ such that for every $E\subset Im(\gamma)$ with $\mu(E)<\delta$ we have $\int_E\rho d\mu<\epsilon$. Then if $0\leq a_1<b_1<a_2<b_2<\ldots<a_n<b_n\leq h$ satisfy $\sum_i |b_i-a_i|<\delta$,
\begin{align*}\mu(\cup_i\gamma([a_i,b_i]))=\sum_i\nu_\gamma(b_i)-\nu_\gamma(a_i)=\sum_i b_i-a_i<\delta\end{align*}
and therefore
\begin{align*}\sum_i |f\circ\gamma(b_i)-f\circ\gamma(a_i)|\leq\sum_i \int_{\gamma|_{[a_i,b_i]}}\rho=\int_{\cup_i\gamma([a_i,b_i])}\rho d\mu<\epsilon.\end{align*}\end{proof}

Let now $\Gamma^*$ be a subset of $\Gamma^\mu$, closed under taking subpaths (i.e. if $\gamma\in\Gamma^*$ and $\tilde{\gamma}$ is a non-trivial subpath of $\gamma$, then $\tilde{\gamma}\in\Gamma^*$). A nonnegative Borel measurable function $\rho$ satisfying 
\begin{align*}|f(x)-f(y)|\leq \int_\gamma \rho\end{align*}
for every $\gamma\in \Gamma^*$ with endpoints $x,y$, for every pair of points $x,y$ with $f(x),f(y)$ finite is called a \textbf{$\mu$-upper gradient} for $f$ with respect to $\Gamma^*$. As theorem \ref{ac} shows, if a function $f$ has an upper gradient with respect to $\Gamma^*$ that is integrable over each path in $\Gamma^*$, then it is absolutely continuous over every path in $\Gamma^*$.

Let $\RR^2$ be equiped with the parabolic distance $d$ discussed in the introduction, and let $\mu=\HH^2$. If $\gamma$ is a segment joining $x=(a,ka+b)$ with $y=(a+h,k(a+h)+b)$ for some $h>0$, then its measure $\mu$ is just its height $|k|h$, while its length is $\sqrt{1+k^2}h$ so in fact we have $d\mu=\frac{k}{\sqrt{1+k^2}}dl$ over these paths (clearly when $k\imply 0$ we get $\mu=0$ and when $k\imply\infty$, $\mu=l$). 

Now, for $f$ smooth,
\begin{align*}|f(y)-f(x)| &\leq\int_0^{\sqrt{1+k^2}h} \left|\nabla f\left(a+\frac{t}{\sqrt{1+k^2}},b+\frac{t}{\sqrt{1+k^2}}\right)\right|dt\\
&= \int_\gamma|\nabla f|ds\\
&=\frac{\sqrt{1+k^2}}{k}\int_{Im(\gamma)}|\nabla f|d\mu,\end{align*}
and the same bound can be shown in a similar way for $h<0$.

Therefore if we consider $\Gamma^*_k$ to be the set of all polygonal paths made up of segments of slope $\pm k$ for a fixed $0<k<\infty$, we obtain that $\frac{\sqrt{1+k^2}}{k}|\nabla f|$ is an upper gradient for $f$ with respect to $\Gamma^*_k$. The following picture illustrates a path of $\Gamma_k^*$ for $k=1$.

\begin{center}
\begin{tikzpicture}

\draw[gray, dashed](-2,1)--(-1,2);
\draw[gray, dashed](-2,0)--(0,2);
\draw[gray, dashed](-2,-1)--(1,2);
\draw[gray, dashed](-2,-2)--(2,2);
\draw[gray, dashed](-1,-2)--(2,1);
\draw[gray, dashed](0,-2)--(2,0);
\draw[gray, dashed](1,-2)--(2,-1);

\draw[gray, dashed](-2,-1)--(-1,-2);
\draw[gray, dashed](-2,0)--(0,-2);
\draw[gray, dashed](-2,1)--(1,-2);
\draw[gray, dashed](-2,2)--(2,-2);
\draw[gray, dashed](-1,2)--(2,-1);
\draw[gray, dashed](0,2)--(2,0);
\draw[gray, dashed](1,2)--(2,1);

\draw[line width=1pt] (-1.5,-0.5)--(-1,0)--(-0.5,-0.5)--(0.5,0.5)--(1,0)--(1.25,0.25);
\node[left]  at (-1.5,-0.5){$x$};
\node[right]  at (1.25,0.25){$y$};
\node[below]  at (0,0){$\gamma$};

\end{tikzpicture}
\end{center}

Now, if we consider $X=\RR^n$ with Euclidean distance, but $d\mu=\omega d\HH^1$ where $\omega$ and $\frac{1}{\omega}$ are locally integrable with respect to $\HH^1$, we obtain $\Gamma^\mu=\Gamma^{rect}$, where $\Gamma^{rect}$ is the set of all non-trivial injective rectifiable paths. For $f$ smooth and $\gamma \in \Gamma^\mu$,
\begin{align*}|f(y)-f(x)| &\leq \int_\gamma|\nabla f|=\int_{Im(\gamma)}|\nabla f|d\HH^1\\
&=\int_{Im(\gamma)}\frac{|\nabla f|}{\omega}d\mu,\end{align*}
and in fact the same can be applied to any `classical' upper gradient of a function $f$. There is clearly a one to one correspondence between upper gradients $\rho$ with $\HH^1$ and upper gradients of the form $\rho/\omega$ with measure $\mu$.

\section{Modulus of a path family and $p$-weak upper gradients}

Let now $m$ be a Borel measure on $X$. As in \cite{Sh}, we adjust the definition of modulus of a set of measures in \cite{Fu} to path families.

For every family $\Gamma\subset\Gamma^\mu$ and $0<p<\infty$, we define its \textbf{$p$-modulus} as
\begin{align*}Mod_p(\Gamma)=\inf\int_X g^p dm\end{align*}
where the infimum is taken over all nonnegative Borel measurable functions $g:X\imply\RR$ satisfying
\begin{align*}\int_\gamma g\geq1\end{align*}
for every $\gamma\in\Gamma$.

The following results can be found in \cite{Fu}, we state them here in the language of paths instead of measures.

\begin{thm}$Mod_p$ is an outer measure on $\Gamma^\mu$.\end{thm}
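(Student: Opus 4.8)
The plan is to verify the three defining properties of an outer measure on the ground set $\Gamma^\mu$: that $Mod_p(\emptyset)=0$, that $Mod_p$ is monotone, and that it is countably subadditive. The first two are immediate from the definition. For the empty family the admissibility requirement $\int_\gamma g\geq 1$ is vacuous, so the constant function $g\equiv 0$ is an admissible competitor and yields $Mod_p(\emptyset)\leq\int_X 0\,dm=0$. For monotonicity, if $\Gamma_1\subseteq\Gamma_2$ then every $g$ that is admissible for $\Gamma_2$ (i.e.\ satisfies $\int_\gamma g\geq 1$ for all $\gamma\in\Gamma_2$) is \emph{a fortiori} admissible for $\Gamma_1$; the infimum defining $Mod_p(\Gamma_1)$ is therefore taken over a larger class of competitors, so $Mod_p(\Gamma_1)\leq Mod_p(\Gamma_2)$.

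The substance of the proof is countable subadditivity: given families $\Gamma_i\subseteq\Gamma^\mu$ I want $Mod_p\left(\bigcup_i\Gamma_i\right)\leq\sum_i Mod_p(\Gamma_i)$. If the right-hand side is infinite there is nothing to prove, so I assume each term is finite. Fixing $\epsilon>0$, for each $i$ I choose a nonnegative Borel function $g_i$ admissible for $\Gamma_i$ with $\int_X g_i^p\,dm\leq Mod_p(\Gamma_i)+\epsilon 2^{-i}$, which is possible by definition of the infimum. The key construction is the combination $g=\left(\sum_i g_i^p\right)^{1/p}$, again nonnegative and Borel measurable as a countable combination of Borel functions. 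Since $g^p=\sum_i g_i^p\geq g_j^p$ for every $j$, we have $g\geq g_j$ pointwise, whence $\int_\gamma g\geq\int_\gamma g_j\geq 1$ for any $\gamma\in\Gamma_j$. As every $\gamma\in\bigcup_i\Gamma_i$ lies in some $\Gamma_j$, the function $g$ is admissible for the union.

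It then remains to estimate the cost $\int_X g^p\,dm=\int_X\sum_i g_i^p\,dm$. Because all integrands are nonnegative, Tonelli's theorem (or monotone convergence applied to the partial sums) lets me interchange summation and integration to obtain $\int_X g^p\,dm=\sum_i\int_X g_i^p\,dm\leq\sum_i Mod_p(\Gamma_i)+\epsilon$. Since $g$ is admissible for the union we have $Mod_p\left(\bigcup_i\Gamma_i\right)\leq\int_X g^p\,dm$, and letting $\epsilon\to 0$ gives the claimed inequality. I expect the only genuinely delicate points to be bookkeeping ones: confirming that the combined function $g$ is Borel measurable and that the interchange of sum and integral is legitimate, both of which are secured by nonnegativity. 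The real crux is the choice $g=\left(\sum_i g_i^p\right)^{1/p}$ rather than, say, $\sup_i g_i$, since it is precisely the $p$-th power that makes the cost split additively while preserving admissibility under the path integral $\int_\gamma$.
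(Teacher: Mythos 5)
Your proof is correct and is essentially the classical argument of Fuglede, which the paper does not reproduce but simply cites (\cite{Fu}); your key device $g=\left(\sum_i g_i^p\right)^{1/p}$, which keeps the cost additive while preserving admissibility via the pointwise bound $g\geq g_j$, is exactly the standard one, and it works for all $0<p<\infty$ since only pointwise monotonicity and Tonelli are needed. No gaps to report.
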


As expected, we say that a property holds for \textsl{$p$-almost every path} $\gamma\in\Gamma^\mu$ if the set $\Gamma$ where it does not hold has $Mod_p(\Gamma)=0$. A useful property of sets of $p$-modulus zero is the following.

\begin{lem}\label{A}$Mod_p(\Gamma)=0$ if and only if there exists a nonnegative Borel measurable function $g$ satisfying $\int_X g^pdm<\infty$ and
\begin{align*}\int_\gamma g=\infty\end{align*}
for every $\gamma\in\Gamma$.\end{lem}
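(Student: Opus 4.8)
The plan is to prove the standard characterization of sets of $p$-modulus zero, treating the two implications separately.

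For the \emph{if} direction, suppose such a $g$ exists with $\int_X g^p\,dm<\infty$ and $\int_\gamma g=\infty$ for every $\gamma\in\Gamma$. First I would observe that for any $\varepsilon>0$, the function $\varepsilon g$ is nonnegative, Borel measurable, and satisfies $\int_\gamma \varepsilon g=\infty\geq 1$ for every $\gamma\in\Gamma$, so $\varepsilon g$ is admissible in the infimum defining $Mod_p(\Gamma)$. Hence $Mod_p(\Gamma)\leq \int_X (\varepsilon g)^p\,dm=\varepsilon^p\int_X g^p\,dm$. Letting $\varepsilon\to 0$ forces $Mod_p(\Gamma)=0$.

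For the \emph{only if} direction, assume $Mod_p(\Gamma)=0$. By the definition of the infimum, for each integer $k\geq 1$ I can choose an admissible $g_k$ (nonnegative, Borel, with $\int_\gamma g_k\geq 1$ for all $\gamma\in\Gamma$) such that $\int_X g_k^p\,dm\leq 2^{-kp}$. The natural candidate is $g=\sum_{k=1}^\infty g_k$, which is nonnegative and Borel measurable as a countable sum of such functions. To control its $L^p$-norm I would use the triangle inequality in $L^p(m)$ (Minkowski's inequality), giving $\|g\|_p\leq \sum_k \|g_k\|_p\leq \sum_k 2^{-k}=1$, so $\int_X g^p\,dm\leq 1<\infty$. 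For the path integral, since each $g_k\leq g$ pointwise and line integration $\int_\gamma(\cdot)=\int_{Im(\gamma)}(\cdot)\,d\mu$ is monotone, I get $\int_\gamma g\geq \int_\gamma \sum_{k=1}^N g_k=\sum_{k=1}^N\int_\gamma g_k\geq N$ for every $N$ and every $\gamma\in\Gamma$, where the interchange of finite sum and integral is elementary; letting $N\to\infty$ yields $\int_\gamma g=\infty$.

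The only genuinely delicate point is the monotone-convergence / interchange-of-sum-and-integral step in the second direction: I need that $\int_\gamma \sum_k g_k=\sum_k\int_\gamma g_k$ and that $g=\sum_k g_k$ remains Borel measurable, but since $\int_\gamma$ is just the Borel integral $\int_{Im(\gamma)}\cdot\,d\mu$ over the image, this follows at once from the monotone convergence theorem applied to partial sums on $Im(\gamma)$. Everything else is routine, so the proof is short; the argument is essentially the path-family translation of Fuglede's classical result on moduli of measure families cited in the paper.
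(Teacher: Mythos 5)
Your argument is correct for $1\le p<\infty$, and it is essentially the classical proof: note that the paper itself gives no proof of this lemma, deferring to Fuglede \cite{Fu}, and your scheme (scaling an admissible function for the easy direction; summing a sequence of admissible functions with geometrically decaying norms for the hard direction) is precisely the argument found there. The measurability of the countable sum, the interchange $\int_\gamma\sum_k g_k=\sum_k\int_\gamma g_k$ by monotone convergence on $Im(\gamma)$, and the conclusion $\int_\gamma g\ge N$ for all $N$ are all handled properly.

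The one point to flag is that the paper defines $Mod_p$ for all $0<p<\infty$, and the lemma is stated without restricting $p$; your appeal to Minkowski's inequality $\|g\|_p\le\sum_k\|g_k\|_p$ is not valid when $0<p<1$, where $\|\cdot\|_p$ fails the triangle inequality. The repair is immediate and does not disturb the rest of your proof: for $0<p\le 1$ use instead the pointwise subadditivity $(a+b)^p\le a^p+b^p$ for $a,b\ge 0$, which for the partial sums (and then by monotone convergence) yields
\begin{align*}
\int_X g^p\,dm=\int_X\Bigl(\sum_k g_k\Bigr)^p dm\le\sum_k\int_X g_k^p\,dm\le\sum_k 2^{-kp}<\infty.
\end{align*}
With that substitution your proof covers the full range $0<p<\infty$ claimed by the statement.
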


We also need the following result.

\begin{lem}\label{B}If $\int|g_n-g|^pdm\imply 0$, there exists a subsequence $(g_{n_k})_k$ such that $\int_\gamma|g_{n_k}-g|\imply0$ for $p$-almost every $\gamma\in\Gamma^\mu$.\end{lem}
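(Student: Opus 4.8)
The plan is to reduce the statement to Lemma \ref{A} by packaging the tail of the subsequence into a single Borel function of finite $L^p(m)$ norm. First I would set $\rho_k=|g_{n_k}-g|$, a nonnegative Borel function, and observe that by hypothesis $\int_X\rho_k^p\,dm\imply 0$ along the sequence. The purpose of extracting a subsequence is to make these errors summable in a way that controls the function $G:=\sum_{k}\rho_k$, and the whole argument hinges on arranging $\int_X G^p\,dm<\infty$.

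For $p\geq 1$ this follows from Minkowski's inequality: choosing $(n_k)_k$ so that $\|\rho_k\|_{L^p(m)}\leq 2^{-k}$ gives $\|G\|_{L^p(m)}\leq\sum_k\|\rho_k\|_{L^p(m)}\leq 1$. For $0<p<1$ I would instead invoke the elementary inequality $(\sum_k a_k)^p\leq\sum_k a_k^p$ for nonnegative $a_k$, so that choosing $\int_X\rho_k^p\,dm\leq 2^{-k}$ yields $\int_X G^p\,dm\leq\sum_k\int_X\rho_k^p\,dm\leq 1$. In either case $G$ is a nonnegative Borel function with $\int_X G^p\,dm<\infty$.

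Now I would apply Lemma \ref{A} to the family $\Gamma_\infty=\{\gamma\in\Gamma^\mu:\int_\gamma G=\infty\}$. Since $G$ realizes exactly the condition appearing in that lemma, we conclude $Mod_p(\Gamma_\infty)=0$. To finish, I check the convergence off $\Gamma_\infty$. For $\gamma\notin\Gamma_\infty$ we have $\int_\gamma G<\infty$, and since $\int_\gamma$ is the integral $\int_{Im(\gamma)}\cdot\,d\mu$, monotone convergence gives $\sum_k\int_\gamma\rho_k=\int_\gamma\sum_k\rho_k=\int_\gamma G<\infty$. A convergent series of nonnegative terms has vanishing terms, hence $\int_\gamma|g_{n_k}-g|=\int_\gamma\rho_k\imply 0$. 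Thus the desired convergence holds for every $\gamma$ outside a family of $p$-modulus zero, that is, for $p$-almost every $\gamma\in\Gamma^\mu$.

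The argument is at heart a summability (Borel--Cantelli) trick, and I do not expect a serious obstacle once the subsequence is chosen correctly; the only delicate point is producing $G$ with finite $L^p$ norm, which is precisely where the split between $p\geq 1$ and $0<p<1$ enters. As an alternative that bypasses Lemma \ref{A} and uses only the outer-measure property of $Mod_p$, one could set $\Gamma_k=\{\gamma\in\Gamma^\mu:\int_\gamma\rho_k\geq 2^{-k}\}$, note that $2^k\rho_k$ is admissible for $\Gamma_k$ so that $Mod_p(\Gamma_k)\leq 2^{kp}\int_X\rho_k^p\,dm$, pick the subsequence to make these bounds summable, and apply countable subadditivity to $\limsup_k\Gamma_k$.
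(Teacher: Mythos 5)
Your proof is correct. Note that the paper itself gives no proof of this lemma: it is one of the results imported from Fuglede \cite{Fu}, restated in the language of paths, so there is no internal argument to compare against --- your write-up fills a gap the paper leaves to the reference. Your main route is the classical one: choose the subsequence so that the errors $\rho_k=|g_{n_k}-g|$ are summable in $L^p(m)$, form $G=\sum_k\rho_k$, apply Lemma \ref{A} to $\Gamma_\infty=\{\gamma:\int_\gamma G=\infty\}$, and off $\Gamma_\infty$ use monotone convergence together with the fact that a convergent series of nonnegative terms has vanishing terms. This goes through verbatim in the present setting precisely because $\int_\gamma$ is a genuine measure integral, $\int_{Im(\gamma)}\cdot\,d\mu$, so monotone convergence applies; and since $Mod_p$ is monotone (it is an outer measure), the exceptional family, being contained in $\Gamma_\infty$, has modulus zero. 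Your split between $p\geq1$ (Minkowski) and $0<p<1$ (subadditivity of $t\mapsto t^p$) is warranted, as the paper defines the modulus for all $0<p<\infty$. The alternative you sketch --- noting that $2^k\rho_k$ is admissible for $\Gamma_k=\{\gamma:\int_\gamma\rho_k\geq2^{-k}\}$, hence $Mod_p(\Gamma_k)\leq 2^{kp}\int_X\rho_k^p\,dm$, choosing the subsequence to make these bounds summable, and applying countable subadditivity to $\limsup_k\Gamma_k$ --- is also correct and is essentially Fuglede's original Borel--Cantelli argument; it has the mild advantage of bypassing Lemma \ref{A} entirely, using only the outer-measure property of $Mod_p$, whereas your main argument has the advantage of producing a single explicit function $G$ witnessing the exceptional family.
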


Given a set $E\subset X$ we define
\begin{align*}\Gamma_E=\{\gamma\in\Gamma^\mu: Im(\gamma)\cap E\neq\emptyset\},\end{align*}
\begin{align*}\Gamma_E^+=\{\gamma\in\Gamma^\mu:\mu( Im(\gamma)\cap E)>0\}\end{align*}
and we have the following lemma

\begin{lem}If $m(E)=0$, then $Mod_p(\Gamma_E^+)=0$.\end{lem}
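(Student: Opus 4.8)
The plan is to produce, via Lemma \ref{A}, a single nonnegative Borel test function whose $p$-th power is $m$-integrable yet whose line integral blows up along every path in $\Gamma_E^+$. Since membership in $\Gamma_E^+$ is precisely the condition $\mu(Im(\gamma)\cap E)>0$, the obvious candidate is $g=\infty\cdot\chi_E$, equal to $+\infty$ on $E$ and to $0$ on $X\menos E$. The whole proof will amount to verifying that this $g$ meets the two requirements of Lemma \ref{A}.

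First I would check admissibility. The $p$-th power is again $\infty\cdot\chi_E$, and using the measure-theoretic convention $\infty\cdot 0=0$ together with the hypothesis $m(E)=0$ one obtains $\int_X g^p\,dm=\infty\cdot m(E)=0<\infty$. Next, for any $\gamma\in\Gamma_E^+$ I would compute $\int_\gamma g=\int_{Im(\gamma)}g\,d\mu$ directly from the definition: splitting $Im(\gamma)$ into its intersection with $E$ and its complement, the contribution off $E$ vanishes, while on $Im(\gamma)\cap E$ the integrand is $+\infty$ over a set of positive $\mu$-measure, so $\int_\gamma g=\infty$. Feeding these two facts into Lemma \ref{A} immediately gives $Mod_p(\Gamma_E^+)=0$.

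The only delicate points, and where I would be most careful, are measurability and the handling of the infinite value. If $E$ is not assumed Borel, then $g=\infty\chi_E$ need not be Borel measurable; I would circumvent this by choosing a Borel hull $E'\supseteq E$ with $m(E')=0$, running the argument for $E'$, and invoking monotonicity of the outer measure $Mod_p$ (established earlier) to conclude $Mod_p(\Gamma_E^+)\leq Mod_p(\Gamma_{E'}^+)=0$. I would also state explicitly that Lemma \ref{A} admits test functions taking the value $+\infty$ and that the conventions $\infty\cdot 0=0$ and $\int_A\infty\,d\mu=\infty$ whenever $\mu(A)>0$ are in force, since the entire argument rests on them. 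With those conventions fixed, the proof is genuinely one line of verification and presents no further obstacle.
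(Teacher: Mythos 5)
Your proof is correct and follows exactly the paper's own argument: the paper likewise takes $g=\infty\chi_E$, notes it vanishes $m$-a.e.\ so $\int_X g^p\,dm=0$, and that $\int_\gamma g=\infty$ for every $\gamma\in\Gamma_E^+$, then concludes via Lemma \ref{A}. Your extra care about Borel measurability (replacing $E$ by a Borel hull) is a reasonable refinement but does not change the approach.
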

\begin{proof}Trivial, as $g=\infty\chi_E$ satisfies $g=0$ $m$-almost everywhere, but $\int_\gamma g=\infty$ for every $\gamma\in\Gamma_E^+$.\end{proof}

A nonnegative Borel measurable function $\rho$ satisfying 
\begin{align*}|f(x)-f(y)|\leq \int_\gamma \rho\end{align*}
for $p$-almost every $\gamma\in \Gamma^\mu$ with endpoints $x,y$ is called a \textbf{$p$-weak upper gradient} for $f$.

As in Shanmugalingam's case, we do not lose much by restricting ourselves to weak upper gradients.

\begin{propo}If $\rho$ is a $p$-weak upper gradient for $f$ and $\epsilon>0$, there exists an upper gradient $\rho_\epsilon$ for $f$ such that $\rho_\epsilon\geq\rho$ and $\|\rho-\rho_\epsilon\|_p<\epsilon$.\end{propo}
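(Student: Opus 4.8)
The plan is to exploit the characterization in Lemma \ref{A} to upgrade a $p$-weak upper gradient into a genuine upper gradient by adding a small correction term. Since $\rho$ is a $p$-weak upper gradient, the family $\Gamma_0$ of paths $\gamma\in\Gamma^\mu$ (with endpoints $x,y$) along which the inequality $|f(x)-f(y)|\leq\int_\gamma\rho$ fails satisfies $Mod_p(\Gamma_0)=0$. By Lemma \ref{A}, there is a nonnegative Borel function $g$ with $\int_X g^p\,dm<\infty$ and $\int_\gamma g=\infty$ for every $\gamma\in\Gamma_0$. The key idea is that this $g$ can be scaled to be as small as we like in $L^p$ while retaining the property of being infinite along every bad path.

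First I would set $\rho_\epsilon=\rho+\tfrac{\epsilon}{2\|g\|_p}\,g$, or more carefully, since $g$ may already have small norm, I would replace $g$ by $tg$ for a suitable scalar $t>0$ chosen so that $\|tg\|_p=t\|g\|_p<\epsilon$; note $\|g\|_p<\infty$ so this is possible, and if $\|g\|_p=0$ then $g=0$ $m$-a.e. and the situation is degenerate but still fine. Then $\rho_\epsilon:=\rho+tg$ is clearly nonnegative, Borel measurable, satisfies $\rho_\epsilon\geq\rho$, and $\|\rho_\epsilon-\rho\|_p=\|tg\|_p<\epsilon$, giving two of the three required properties immediately.

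Next I would verify that $\rho_\epsilon$ is a genuine (everywhere) upper gradient for $f$. Take any $\gamma\in\Gamma^\mu$ with endpoints $x,y$ where $f(x),f(y)$ are finite. If $\gamma\notin\Gamma_0$, then the weak-upper-gradient inequality already holds for $\rho$, and since $\rho_\epsilon\geq\rho$ we get $|f(x)-f(y)|\leq\int_\gamma\rho\leq\int_\gamma\rho_\epsilon$. If instead $\gamma\in\Gamma_0$, then $\int_\gamma g=\infty$, so $\int_\gamma\rho_\epsilon\geq t\int_\gamma g=\infty$, and the inequality $|f(x)-f(y)|\leq\int_\gamma\rho_\epsilon$ holds trivially because the left side is finite. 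In either case the upper-gradient inequality is satisfied for every path, completing the argument.

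The main subtlety to watch is the exact bookkeeping of the bad set $\Gamma_0$: I must confirm that Lemma \ref{A} applies to the specific family where $\rho$ fails the inequality, which requires that this failure-family genuinely has $p$-modulus zero (this is precisely the definition of $\rho$ being a $p$-weak upper gradient, so it is immediate). A minor technical point is handling the degenerate case $\|g\|_p=0$ separately, and ensuring $\rho_\epsilon$ remains Borel measurable after the scaling and addition, both of which are routine. I do not expect any serious obstacle, as the construction is the standard "add a controllable infinite-on-the-bad-set function" trick enabled by Lemma \ref{A}.
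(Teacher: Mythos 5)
Your proposal is correct and follows essentially the same route as the paper: invoke Lemma \ref{A} to produce a Borel function $g\in L^p$ that is infinite along every bad path, add a small multiple of it to $\rho$ (the paper uses the factor $\frac{\epsilon}{1+\|g\|_p}$, which handles the degenerate case $\|g\|_p=0$ automatically), and check the upper-gradient inequality by splitting into good paths (where $\rho_\epsilon\geq\rho$ suffices) and bad paths (where $\int_\gamma\rho_\epsilon=\infty$ makes the inequality trivial). Your case analysis at the end is in fact spelled out more carefully than the paper's rather terse verification.
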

\begin{proof}Let $\Gamma$ be the set of paths where the inequality for $\rho$ does not hold ($Mod_p(\Gamma)=0$). Then there exists $g\geq0$ Borel measurable with $\int_X g^pdm<\infty$ but $\int_\gamma g=\infty$ for every $\gamma\in\Gamma$. We define
\begin{align*}\rho_\epsilon=\rho + \frac{\epsilon}{1+\|g\|_p}g,\end{align*}
it is clear that $\rho_\epsilon\geq\rho$, $\int_\gamma\rho_\epsilon\geq 1$ for every $\gamma$, so $\rho_\epsilon$ is an upper gradient for $f$, and finally
\begin{align*}\|\rho_\epsilon-\rho\|_p=\epsilon\frac{\|g\|_p}{1+\|g\|_p}<\epsilon.\end{align*}\end{proof}

As seen in \ref{ac}, functions with `small' upper gradients are absolutely continuous on curves. We say that a function $f$ is $ACC_p$ or \textbf{absolutely continuous over $p$-almost every path} if $f\circ\gamma_h:[0,h(\gamma)]\imply\RR$ is absolutely continuous for $p$-almost every $\gamma$.

\begin{lem}If a function $f$ has a $p$-weak upper gradient $\rho\in L^p$, it is $ACC_p$.\end{lem}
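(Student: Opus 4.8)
The plan is to reduce everything to Theorem \ref{ac} by producing a single family of paths of $p$-modulus zero outside of which the hypotheses of that theorem are satisfied. Recall that Theorem \ref{ac} yields absolute continuity of $f\circ\gamma_h$ as soon as, for the $\mu$-arc length parametrization, one has (i) $\int_\gamma\rho<\infty$, so that $\rho$ is integrable along $\gamma$ and all of its subpaths, and (ii) the upper gradient inequality $|f(\gamma(s))-f(\gamma(t))|\leq\int_{\gamma|_{[s,t]}}\rho$ for \emph{every} $0\le s<t\le h$. Since both conditions depend only on the image of the path and not on its parametrization, it suffices to establish them for $p$-almost every $\gamma\in\Gamma^\mu$.

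First I would dispose of the integrability condition (i). Because $\rho\in L^p$, Lemma \ref{A} applied with $g=\rho$ shows at once that the family $\Gamma_\infty=\{\gamma\in\Gamma^\mu:\int_\gamma\rho=\infty\}$ has $Mod_p(\Gamma_\infty)=0$. Hence $\int_\gamma\rho<\infty$, and therefore $\int_{\gamma|_{[s,t]}}\rho<\infty$ for every subpath, for $p$-almost every $\gamma$.

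The heart of the matter is condition (ii), and this is the step I expect to be the main obstacle: the definition of $p$-weak upper gradient provides the inequality only between the two \emph{endpoints} of $p$-almost every path, whereas Theorem \ref{ac} requires it for \emph{all} subpaths of a fixed $\gamma$ simultaneously, an uncountable family of constraints that cannot be controlled by a naive countable union of null families. To overcome this I would let $\Gamma_1$ be the $p$-null family of paths on which the endpoint inequality fails, and then enlarge it to $\hat\Gamma_1=\{\gamma\in\Gamma^\mu:\text{some non-trivial subpath of }\gamma\text{ lies in }\Gamma_1\}$. The key observation is that $\hat\Gamma_1$ remains $p$-null: by the forward direction of Lemma \ref{A} there is a Borel $g\ge0$ with $\int_X g^p\,dm<\infty$ and $\int_{\gamma'}g=\infty$ for every $\gamma'\in\Gamma_1$; since $Im(\gamma')\subset Im(\gamma)$ whenever $\gamma'$ is a subpath of $\gamma$ and $g\ge0$, monotonicity of the integral gives $\int_\gamma g\ge\int_{\gamma'}g=\infty$ for every $\gamma\in\hat\Gamma_1$, so the converse direction of Lemma \ref{A} yields $Mod_p(\hat\Gamma_1)=0$.

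Finally I would assemble the pieces. Because $Mod_p$ is an outer measure and hence subadditive, the family $\Gamma_\infty\cup\hat\Gamma_1$ has $p$-modulus zero. For any $\gamma$ outside this family, condition (i) holds since $\gamma\notin\Gamma_\infty$, while every non-trivial subpath $\gamma|_{[s,t]}$ avoids $\Gamma_1$ because $\gamma\notin\hat\Gamma_1$; as $f$ is real-valued, the endpoint inequality applied to each such subpath gives condition (ii) for all $0\le s<t\le h$. Theorem \ref{ac} then shows that $f\circ\gamma_h$ is absolutely continuous, which is exactly the assertion that $f$ is $ACC_p$.
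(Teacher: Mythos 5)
Your proof is correct and follows essentially the same route as the paper's: the identical decomposition into the null family of paths having a subpath on which the endpoint inequality fails and the null family where $\int_\gamma\rho=\infty$, followed by an application of Theorem \ref{ac} to everything outside their union. The only cosmetic difference is that you certify $Mod_p(\hat\Gamma_1)=0$ by invoking both directions of Lemma \ref{A}, whereas the paper observes directly that any function admissible for the bad family is admissible for its subpath-closure (so the modulus can only decrease); both arguments rest on the same monotonicity of $\int_\gamma g$ under passing to subpaths.
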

\begin{proof}Let $\Gamma_0$ be the set of all paths $\gamma$ such that $|f(x)-f(y)|>\int_\gamma\rho$ and let $\Gamma_1$ be the set of all paths with a subpath in $\Gamma_0$. As $\rho$ is a weak upper gradient, $Mod_p(\Gamma_0)=0$, but if $g$ satisfies $\int_\gamma g\geq1$, it also satisfies $\int_{\tilde{\gamma}} g\geq1$ for every subpath $\tilde{\gamma}$ of $\gamma$, and therefore
\begin{align*}Mod_p(\Gamma_1)\leq Mod_p(\Gamma_0)=0.\end{align*}
Let $\Gamma_2$ be the set of all paths $\gamma$ with $\int_\gamma\rho=\infty$. Then as $\rho\in L^p$, $Mod_p(\Gamma_2)=0$.
For paths not in $\Gamma_1\cup\Gamma_2$, we can apply \ref{ac} and we conclude the lemma. \end{proof}

We will also need the following lemma later on.

\begin{lem}\label{ac2}If $f$ is $ACC_p$ and $f=0$ $m$-almost everywhere, then the family
\begin{align*}\Gamma=\{\gamma\in\Gamma^\mu: f\circ\gamma\not\equiv0\}\end{align*}
has $p$-modulus zero.\end{lem}
\begin{proof} Let $E=\{x:f(x)\neq0\}$, then $m(E)=0$ and $\Gamma=\Gamma_E$. As $\Gamma_E^+$ has modulus zero (because $m(E)=0$), we only need to see that $\Gamma_E\menos\Gamma_E^+$ also has modulus zero. But if $\gamma\in \Gamma_E\menos\Gamma_E^+$, $ Im(\gamma)\cap E\neq\emptyset$ but $\mu( Im(\gamma)\cap E)=0$, therefore $\gamma_h^{-1}(E)$ has length 0 in $\RR$ and $f\circ\gamma_h$ is nonzero in a set of length 0, and if $E\neq\emptyset$ this set is not empty and $f\circ\gamma_h$ cannot be absolutely continuous. Therefore $Mod_p(\Gamma_E\menos\Gamma_E^+)=0$.\end{proof}

\section{Extended Newton-Sobolev spaces $N^{1,p}$}

From now on, we will work on a fixed subset $\Gamma^*\subset \Gamma^\mu$, closed under taking subpaths. Properties defined on the previous section, such as $p$-weak upper gradients or $ACC_p$, can be easily adjusted to $\Gamma^*$ instead of $\Gamma^\mu$. We will also require that the space $X$ be connected by paths belonging to $\Gamma^*$. This is the case of $\Gamma^*_k$ in the example of $\RR^2$ with the parabolic metric. For the Euclidean case, it is sufficient to consider piecewise linear paths made of segments parallel to the coordinate axis instead of all rectifiable paths to obtain a theory of Sobolev spaces, but in general this need not be the case.

We define the space $\tilde{N}^{1,p}$ as the space of all functions $f$ having a $p$-weak upper gradient, both with finite $p$-norms. We define the $N^{1,p}$ norm as
\begin{align*}\|f\|_{N^{1,p}}=\|f\|_p+\inf_\rho\|\rho\|_p,\end{align*}
where the infimum is taken over all $p$-weak upper gradients of $f$.

It immediately follows from definition that $(\tilde{N}^{1,p},\|\cdot\|_{N^{1,p}})$ is a semi-normed vector space. Moreover, if $f,g\in\tilde{N}^{1,p}$, then 
\begin{align*}|f|,\min\{f,g\},\max\{f,g\}\in\tilde{N}^{1,p}.\end{align*}
As seen before, every function in $\tilde{N}^{1,p}$ is $ACC_p$.

$\tilde{N}^{1,p}$ is not a normed space, as two distinct functions can be equal almost everywhere, but also because a function may be in $\tilde{N}^{1,p}$ while a function equal almost everywhere to it may not. We do have the following as a corollary of \ref{ac2}.

\begin{coro}If $f,g\in\tilde{N}^{1,p}$ and $f=g$ $m$-a.e., then $\|f-g\|_{N^{1,p}}=0$.\end{coro}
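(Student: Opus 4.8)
The goal is to show that if $f,g\in\tilde{N}^{1,p}$ agree $m$-almost everywhere, then $\|f-g\|_{N^{1,p}}=0$. Recall that $\|f-g\|_{N^{1,p}}=\|f-g\|_p+\inf_\rho\|\rho\|_p$, where the infimum runs over $p$-weak upper gradients of $f-g$. The plan is to treat the two summands separately. Since $f=g$ $m$-a.e., the function $h:=f-g$ vanishes $m$-a.e., so immediately $\|h\|_p=0$. It remains to show that the infimum of $\|\rho\|_p$ over $p$-weak upper gradients $\rho$ of $h$ is zero, which I would establish by exhibiting, for each $\epsilon>0$, a $p$-weak upper gradient of $h$ with arbitrarily small $L^p$-norm.

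First I would note that $h=f-g$ lies in $\tilde{N}^{1,p}$ (the space is a vector space), so in particular $h$ is $ACC_p$ and $h=0$ $m$-a.e. This is exactly the hypothesis of Lemma \ref{ac2}, so applying it gives that the family
\begin{align*}\Gamma=\{\gamma\in\Gamma^*: h\circ\gamma\not\equiv0\}\end{align*}
has $p$-modulus zero. The intuition is that $h$ is $p$-a.e.\ constantly zero along paths, so the constant function $\rho\equiv 0$ should serve as a $p$-weak upper gradient.

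Now I would verify that $\rho\equiv0$ is a $p$-weak upper gradient for $h$. For any path $\gamma\notin\Gamma$ with endpoints $x,y$, we have $h\circ\gamma\equiv0$, hence $h(x)=h(y)=0$ and so $|h(x)-h(y)|=0=\int_\gamma 0$. Thus the upper gradient inequality $|h(x)-h(y)|\leq\int_\gamma\rho$ holds with $\rho=0$ for every path outside the exceptional family $\Gamma$, which has $p$-modulus zero. By definition this makes $\rho\equiv0$ a $p$-weak upper gradient of $h$. Consequently $\inf_\rho\|\rho\|_p\leq\|0\|_p=0$, and since the infimum is of nonnegative quantities it equals zero. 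Combining with $\|h\|_p=0$ yields $\|f-g\|_{N^{1,p}}=0$.

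The only subtlety, and the step I would treat most carefully, is the reduction to Lemma \ref{ac2}: one must be sure that $h=f-g$ is genuinely $ACC_p$ (so that the lemma applies) and that the endpoints of paths avoiding $\Gamma$ really give $h(x)=h(y)=0$. The first point follows because $\tilde{N}^{1,p}$ is closed under differences and every element is $ACC_p$; the second is immediate from the definition of $\Gamma$ as the set of paths on which $h\circ\gamma$ is not identically zero. There are no hard estimates here—the entire content of the corollary is packaged into Lemma \ref{ac2}, and the proof is essentially the observation that the zero function is an admissible weak upper gradient once the bad paths form a null family.
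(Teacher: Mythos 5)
Your proof is correct and follows exactly the route the paper intends: the paper states this result as an immediate corollary of Lemma \ref{ac2} without writing out details, and your argument (pass to $h=f-g$, which is $ACC_p$ and vanishes $m$-a.e., invoke Lemma \ref{ac2} to make the bad paths a $p$-modulus-zero family, and observe that $\rho\equiv 0$ is then a $p$-weak upper gradient) is precisely the intended justification. No gaps; the verification that endpoints of good paths satisfy $h(x)=h(y)=0$ is the right point to check, and you checked it.
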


Finally, we define the equivalence relation $f\sim g$ iff $\|f-g\|_{N^{1,p}}=0$, and the quotient space $N^{1,p}=\tilde{N}^{1,p}/\sim$, the \textbf{generalized Newton-Sobolev space}. 

In the case of $\RR^2$ with the parabolic distance, $\mu=\HH^2$ and $m=\HH^3$ defined in the introduction, we saw in section 2 that if we consider $\Gamma^*_k$ for a fixed $k>0$ as our path family, we obtain that $\frac{\sqrt{1+k^2}}{k}|\nabla f|$ is an upper gradient for $f$. In fact, one can show that $N^{1,p}=W^{1,p}$ with equivalent norms. If we consider the whole of $\Gamma^\mu$, this will not happen, as we can see by considering that, if $\rho$ is a bounded upper gradient for $f$, for paths $\gamma$ joining $(x,y)$ and $(x',y')$ we obtain
\begin{align*}|f(x,y)-f(x',y')|\leq \int_\gamma \rho\leq \|\rho\|_\infty \mu(Im(\gamma)).\end{align*}
So for the case of segments in $\Gamma^*_k$ we would obtain
\begin{align*}|f(x,y)-f(x',y')|\leq \int_\gamma \rho\leq \|\rho\|_\infty |y-y'|,\end{align*}
and if we allow segments of arbitrarily small height we obtain that $f$ must be cylindrical, $f(x,y)=g(y)$, so unless $f\equiv 0$ or $p=\infty$, we cannot obtain $f\in L^p(dm)$. For $p=\infty$, we obtain that $N^{1,\infty}$ consists of cylindrical bounded functions $f(x,y)=g(y)$ with $g$ a Lipschitz-1 function (in the Euclidean sense).

Back to the example of $X=\RR^n$ with Euclidean distance and $d\mu=\omega d\HH^1$, where both $\omega$ and $\frac{1}{\omega}$ are locally integrable with respect to $\HH^1$, so for the measure $dm=\omega^p dx$ (where $dx$ is Lebesgue measure) we get that the space $N^{1,p}(dm)$ consists of those $f\in L^p(dm)$ such that $\frac{|\nabla f|}{\omega}\in L^p(dm)$, or in terms of Lebesgue measure,
\begin{align*}N^{1,p}=\{f: \omega f, |\nabla f|\in L^p(dx)\}.\end{align*}
For the particular case $\omega(x)=1+|x|$, this space coincides with the Sobolev-Hermite space $\mathfrak{L}^p_1$, as defined in \cite{BT}.

We will show now, as \cite{Sh}, that $N^{1,p}$ is a Banach space, but first a lemma.

\begin{lem}Let $F\subset X$ be such that
\begin{align*}\inf\left\{\|f\|_{N^{1,p}}:f\in\tilde{N}^{1,p}(X)\yy f|_F\geq1\right\}=0.\end{align*}
Then $Mod_p(\Gamma_F)=0$.\end{lem}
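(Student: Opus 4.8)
The goal is to show that if a set $F$ admits functions $f\in\tilde N^{1,p}$ with $f|_F\geq 1$ and arbitrarily small $N^{1,p}$-norm, then the family of paths touching $F$ is $p$-negligible. The natural plan is to build a single Borel function $g$ with finite $L^p$-norm that integrates to $+\infty$ along every $\gamma\in\Gamma_F$, since by Lemma \ref{A} this is exactly equivalent to $Mod_p(\Gamma_F)=0$.

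The first step is to use the hypothesis to extract a sequence. By the infimum condition, for each $k$ I would pick $f_k\in\tilde N^{1,p}$ with $f_k|_F\geq 1$ and $\|f_k\|_{N^{1,p}}<2^{-k}$; in particular $f_k$ has a $p$-weak upper gradient $\rho_k$ with $\|\rho_k\|_p<2^{-k}$, and also $\|f_k\|_p<2^{-k}$. The plan is then to set $g=\sum_k \rho_k$ (or $g=\sum_k(\rho_k+|f_k|)$ to keep control of the function values as well). Since $\|g\|_p\leq\sum_k\|\rho_k\|_p<\infty$ by Minkowski, $g$ is a nonnegative Borel function in $L^p$, so $\int_X g^p\,dm<\infty$.

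The heart of the argument is to verify $\int_\gamma g=\infty$ for every $\gamma\in\Gamma_F$, after discarding a further $p$-null family. The issue is that each $\rho_k$ is only a \emph{weak} upper gradient, so the defining inequality fails on an exceptional family $\Gamma^{(k)}$ of modulus zero; let $\Gamma_\infty=\bigcup_k\Gamma^{(k)}$, which still has $Mod_p=0$ by countable subadditivity. I would also throw into the exceptional set the family where $\int_\gamma g<\infty$ forces good behavior; concretely, for $\gamma\in\Gamma_F\setminus\Gamma_\infty$ pick a point $x\in Im(\gamma)\cap F$ and an endpoint $y$ of $\gamma$. If $\int_\gamma g<\infty$ then for all but finitely many $k$ the value $f_k(y)$ is finite (controlled via $\int_\gamma|f_k|\leq\int_\gamma g<\infty$ along the relevant subpath, using that subpaths of $\gamma$ lie in $\Gamma^*$), and the weak-upper-gradient inequality gives $1\leq f_k(x)\leq |f_k(x)-f_k(y)|+|f_k(y)|\leq\int_\gamma\rho_k+|f_k(y)|$. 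The plan is to show the right-hand side tends to $0$ along a subsequence, producing the contradiction $1\leq 0$, so that in fact $\int_\gamma g=\infty$.

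The main obstacle I anticipate is controlling the endpoint value $|f_k(y)|$ uniformly enough to drive it to zero, since the weak-upper-gradient estimate only bounds differences of $f_k$ along $\gamma$, not its absolute size at a point. The clean way around this is to incorporate $\sum_k|f_k|$ into $g$ and use Lemma \ref{B}: passing to a subsequence, $\int_\gamma|f_k|\to 0$ for $p$-a.e.\ $\gamma$, so along such $\gamma$ the average of $|f_k|$ over $Im(\gamma)$ is small and I can choose $y$ (or a positive-measure set of points) where $f_k(y)$ is small; combined with $\int_\gamma\rho_k\to 0$ this forces the displayed inequality to collapse. Adding the corresponding null families to the exceptional set keeps the total modulus zero, and then $g$ witnesses $Mod_p(\Gamma_F)=0$ via Lemma \ref{A}, completing the proof.
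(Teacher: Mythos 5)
Your proposal is correct, but it runs along a genuinely different track than the paper's proof. The paper forms the monotone sums $u=\sum_k|v_k|$ and $g=\sum_k\rho_k$, notes that $u|_F=\infty$ while $\int_X|u|^p\,dm<\infty$, so the blow-up set $E=\{u=\infty\}$ contains $F$ and has $m(E)=0$, and then shows that every path outside a null family (assembled from Lemma \ref{A}, Lemma \ref{B} and the family $\Gamma_E^+$) misses $E$ entirely: picking $y\in Im(\gamma)\setminus E$, the estimate $|u_n(x)|\leq|u_n(y)|+\int_\gamma g_n\leq|u(y)|+\int_\gamma g$ propagates finiteness of $u$ to every $x\in Im(\gamma)$, whence $\Gamma_F\subset\Gamma_E\subset\Gamma\cup\Gamma_E^+$. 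You avoid any blow-up set and argue by contradiction path by path: fixing $x\in Im(\gamma)\cap F$, where every $f_k\geq 1$, you choose for each $k$ a comparison point $y_k\in Im(\gamma)$ with $|f_k(y_k)|\leq\fint_{Im(\gamma)}|f_k|\,d\mu$ and force $1\leq\int_\gamma\rho_k+|f_k(y_k)|$ to collapse to $0$. This is sound, and your moving point $y_k$ (rather than a fixed endpoint) is exactly the right fix; note that the averages tend to zero either by Lemma \ref{B} applied to $f_k\to 0$ in $L^p$, or even more simply because, once $\int_\gamma g<\infty$ with $g=\sum_k(\rho_k+|f_k|)$, monotone convergence makes $\int_\gamma\rho_k+\int_\gamma|f_k|$ the terms of a convergent series, so Lemma \ref{B} can be dispensed with entirely. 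As for what each route buys: the paper's (Fuglede--Shanmugalingam style) argument proves slightly more, namely $Mod_p(\Gamma_E)=0$ for the whole $m$-null set $E\supset F$, while yours is more self-contained and makes visible where the definition of $\Gamma^\mu$ enters, since the average-point selection requires $0<\mu(Im(\gamma))<\infty$. One technical point you should state explicitly (the paper is equally terse about it): your upper-gradient inequalities are applied on the subpath of $\gamma$ from $x$ to $y_k$, so each exceptional family $\Gamma^{(k)}$ must be enlarged to the family of paths possessing a subpath in $\Gamma^{(k)}$; this enlargement still has modulus zero because any admissible function for $\Gamma^{(k)}$ is admissible for it, by monotonicity of $\int_\gamma$ under passage to subpaths, which is the same device used in the paper's proof that functions with $p$-weak upper gradients in $L^p$ are $ACC_p$.
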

\begin{proof}For every $n$ we take $v_n\in\tilde{N}^{1,p}(X)$ with $v_n|_F\geq1$ and $\|v_n\|_{N^{1,p}}<2^{-n}$, and take weak upper gradients $\rho_n$ of $v_n$ with $\|\rho_n\|_p<2^{-n}$. Take $u_n=\sum_1^n|v_k|,g_n=\sum_1^n\rho_k$ (each $g_n$ will be a weak upper gradient of $u_n$) and $u=\sum|v_n|$ (observe that $u|_F=\infty$), $g=\sum\rho_n$.
Every $u_n$ turns to be in $\tilde{N}^{1,p}$, and $(u_n),(g_n)$ are Cauchy in $L^p$, therefore convergent in $L^p$ to functions $\tilde{u},\tilde{g}$ respectively. Then $u=\tilde{u},g=\tilde{g}$ a.e. and we have $\int|u|^p<\infty$.
Let $E=\{x\in X: u(x)=\infty\}$, then $m(E)=0$ (as $\int_X|u|^p<\infty$) and $F\subset E$.
If we take
\begin{align*}\Gamma=\left\{\gamma:\int_\gamma g=\infty\oo\int_\gamma g_n\not\imply\int_\gamma g\right\}\end{align*}
then $Mod_p(\Gamma)=0$ from \ref{A} and \ref{B}.
If $\gamma\not\in\Gamma\cup\Gamma_E^+$ ($Mod_p(\Gamma_E^+)=0$), then there exists $y\in Im(\gamma)\menos E$, and if $x\in Im(\gamma)$, 
\begin{align*}|u_n(x)|\leq|u_n(y)|+\int_\gamma g_n\leq|u(y)|+\int_\gamma g,\end{align*}
therefore $|u(x)|<\infty$ and $\gamma\not\in\Gamma_E$, and we have 
\begin{align*}Mod_p(\Gamma_F)\leq Mod_p(\Gamma_E)\leq Mod_p(\Gamma\cup\Gamma_E^+)=0.\end{align*}\end{proof}

\begin{thm}$N^{1,p}$ is Banach.\end{thm}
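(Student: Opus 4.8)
The plan is to show completeness by proving that every absolutely summable series in $N^{1,p}$ converges, which for a normed space is equivalent to Banach completeness. So I would start with a sequence $(f_n)\subset \tilde{N}^{1,p}$ whose representatives satisfy $\sum_n \|f_{n+1}-f_n\|_{N^{1,p}}<\infty$ (passing to such a subsequence of an arbitrary Cauchy sequence is harmless, since a Cauchy sequence with a convergent subsequence converges). Writing $w_n=f_{n+1}-f_n$, I would choose weak upper gradients $\rho_n$ of $w_n$ with $\|\rho_n\|_p$ also summable. Since $\|\cdot\|_p\leq\|\cdot\|_{N^{1,p}}$, the partial sums of $\sum w_n$ are Cauchy in $L^p$, so $f_n\to f$ in $L^p$ for some $f\in L^p$; likewise $g=\sum_n \rho_n$ converges in $L^p$, and I would set $\sigma=\sum_n\rho_n$ (the genuine pointwise sum) so that $\sigma=g$ $m$-a.e. and $\|g\|_p\leq\sum\|\rho_n\|_p<\infty$. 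The candidate limit is $f$, with candidate upper gradient controlled by $g$.

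The heart of the argument is to upgrade the $L^p$-convergence to convergence \emph{along $p$-almost every path} and to exhibit a weak upper gradient for $f$. First I would invoke Lemma \ref{A} applied to the tails: the set $\Gamma_a$ of paths on which $\int_\gamma\sigma=\infty$ has $Mod_p=0$, and by Lemma \ref{B} (after possibly thinning the subsequence) the set $\Gamma_b$ of paths on which $\int_\gamma\rho_n\not\to\int_\gamma\rho_n$-partial-sums, i.e. on which the series $\sum_n\int_\gamma\rho_n$ fails to equal $\int_\gamma\sigma$, also has $p$-modulus zero. Off $\Gamma_a\cup\Gamma_b$, for any path $\gamma$ with endpoints $x,y$ at which all relevant values are finite, the triangle inequality gives, for $M>N$,
\begin{align*}
|f_M(x)-f_N(x)-(f_M(y)-f_N(y))|\leq\sum_{n=N}^{M-1}\int_\gamma\rho_n\xrightarrow[N,M\to\infty]{}0,
\end{align*}
so $(f_n(x)-f_n(y))$ is Cauchy in $\RR$ for every such $x,y$ on $\gamma$. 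Fixing one endpoint where $f_n$ converges (available because the modular exceptional set and the $m$-null set where $L^p$-convergence may fail combine into a set of paths of $p$-modulus zero, using the lemma immediately preceding this theorem together with Lemma \ref{ac2}), I would conclude that $f_n\circ\gamma$ converges pointwise along $p$-a.e.\ path, that the limit agrees $m$-a.e.\ with $f$, and that passing to the limit in $|f_N(x)-f_N(y)|\leq\int_\gamma\sigma$ yields $|f(x)-f(y)|\leq\int_\gamma\sigma$ for $p$-a.e.\ $\gamma$. Hence $\sigma$ is a $p$-weak upper gradient for $f$, so $f\in\tilde{N}^{1,p}$.

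Finally I would estimate the Newton-Sobolev distance between $f$ and the partial sums. For each $N$, the function $f-f_N=\sum_{n\geq N}w_n$ has the tail $\sum_{n\geq N}\rho_n$ as a $p$-weak upper gradient (by the same path argument applied to the tail series), so
\begin{align*}
\|f-f_N\|_{N^{1,p}}\leq\Big\|\sum_{n\geq N}w_n\Big\|_p+\Big\|\sum_{n\geq N}\rho_n\Big\|_p\leq\sum_{n\geq N}\|w_n\|_p+\sum_{n\geq N}\|\rho_n\|_p\to0,
\end{align*}
which gives convergence in $N^{1,p}$ and completes the proof.

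The main obstacle I anticipate is the bookkeeping needed to pin down a single exceptional path-family of $p$-modulus zero that simultaneously handles three failure modes: paths on which $\int_\gamma\sigma=\infty$, paths on which the series of path-integrals $\sum\int_\gamma\rho_n$ diverges from $\int_\gamma\sigma$, and paths that meet the $m$-null set where the pointwise and $L^p$ limits disagree. The preceding lemma (converting smallness of the $N^{1,p}$-norm on a set $F$ into $Mod_p(\Gamma_F)=0$) together with Lemma \ref{ac2} is exactly engineered to absorb this last, $ACC_p$-flavored difficulty, so the technical crux is really organizing the subsequence selection and the modulus subadditivity so that these three null families can be unioned and discarded at once.
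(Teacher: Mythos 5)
Your architecture is essentially the paper's: pass to a subsequence whose increments $w_n=f_{n+1}-f_n$ have summable $N^{1,p}$-norms, choose weak upper gradients $\rho_n$ with $\sum_n\|\rho_n\|_p<\infty$, identify the $L^p$ limit $f$, discard a countable union of modulus-zero path families, and pass to the limit along the surviving paths to conclude that the tails $\sum_{n\geq N}\rho_n$ are $p$-weak upper gradients of $f-f_N$; the closing estimate is also the paper's. However, there is a genuine gap at exactly the step you relegate to a parenthesis and later describe as bookkeeping: the claim that the $m$-null set where pointwise convergence fails ``combines into a set of paths of $p$-modulus zero.'' That claim is the heart of the theorem, and the two lemmas you cite do not deliver it by themselves.

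Concretely, let $E=\{x:\sum_n|w_n(x)|=\infty\}$, the set off which the pointwise limit exists. Knowing $m(E)=0$ only gives $Mod_p(\Gamma_E^+)=0$ (the lemma of Section 3); what you need is $Mod_p(\Gamma_E)=0$, since the upper-gradient inequality is evaluated at endpoints, and an endpoint can lie in $E$ even when $\gamma$ meets $E$ in a $\mu$-null set. The lemma preceding the theorem does bridge exactly this, but it has a hypothesis that must be verified: one must exhibit functions $g\in\tilde{N}^{1,p}$ with $g|_E\geq1$ and $\|g\|_{N^{1,p}}$ arbitrarily small. Producing such functions is the real content of the paper's proof: it introduces the Chebyshev sets $E_n=\{|u_n-u_{n+1}|\geq2^{-n}\}$, sets $E=\limsup E_n$, and uses $2^n|u_n-u_{n+1}|$ as test functions, which is precisely why it extracts a subsequence with the faster decay $\|u_n-u_{n+1}\|_{N^{1,p}}<2^{-n(p+1)/p}$ rather than mere summability. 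In your setup the verification is available and even cleaner, namely via the tail sums $g_N=\sum_{n\geq N}|w_n|$: these lie in $\tilde{N}^{1,p}$ with weak upper gradient $\sum_{n\geq N}\rho_n$, are identically $+\infty\geq1$ on $E$, and satisfy $\|g_N\|_{N^{1,p}}\leq\sum_{n\geq N}\left(\|w_n\|_p+\|\rho_n\|_p\right)\to0$; but some such construction must appear, and without it the proof does not close. Moreover, Lemma \ref{ac2} cannot substitute for it: that lemma presupposes the function is $ACC_p$, which for the limit function is known only \emph{after} one proves $f\in\tilde{N}^{1,p}$, so invoking it at this stage is circular. (Two smaller slips: Lemma \ref{B} is superfluous, since $\sigma=\sum_n\rho_n$ pointwise with $\rho_n\geq0$ gives $\int_\gamma\sigma=\sum_n\int_\gamma\rho_n$ on every path by monotone convergence; and $\sigma$ is a weak upper gradient of $f-f_1$, not of $f$, so the inequality $|f_N(x)-f_N(y)|\leq\int_\gamma\sigma$ is false as written, though harmless since $f=f_1+(f-f_1)$ and $\tilde{N}^{1,p}$ is a vector space.)
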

\begin{proof}Let $(u_n)$ be Cauchy in $N^{1,p}$. By taking subsequences we can assume
\begin{align*}\|u_n-u_{n+1}\|_{N^{1,p}}< 2^{-n\frac{p+1}{p}}\end{align*}
and take weak upper gradients $g_n$ of $u_n-u_{n+1}$ with
\begin{align*}\|g_n\|_p<2^{-n}.\end{align*}
Define
\begin{align*}E_n=\{x\in X:|u_n(x)-u_{n+1}(x)|\geq 2^{-n}\},E=\lim\sup E_n.\end{align*}
If $x\not\in E$, then there exists $n_x$ such that $|u_n(x)-u_{n+1}(x)|<2^{-n}$ for $n\geq n_x$ and therefore outside of $E$
\begin{align*}u(x)=\lim u_n(x)\end{align*}
it is well defined.

By Tchebyschev's inequality, $\mu(E_n)\leq 2^{np}\|u_n-u_{n+1}\|_p^p\leq2^{-n}$, and 
\begin{align*}\mu(E)\leq\sum_n^\infty\mu(E_k)\leq 2^{-n}\cdot2,\end{align*}
for every $n$, and on the other hand 
\begin{align*}\inf\left\{\|f\|_{N^{1,p}}:f\in\tilde{N}^{1,p}(X)\yy f|_{E}\geq1\right\} &\leq\\
&\hspace{-3cm}\leq \sum_n^\infty \inf\left\{\|f\|_{N^{1,p}}:f\in\tilde{N}^{1,p}(X)\yy f|_{E_n}\geq1\right\}\\
&\hspace{-3cm}\leq\sum_n^\infty2^{np}\|u_n-u_{n+1}\|^p_{N^{1,p}}\leq 2^{-n}\cdot 2\end{align*}
for every $n$.

By the previous lemma, $Mod_p(\Gamma_E)=0$, and if we define $u|_E\equiv0$, as $(u_n)$ is Cauchy in $L^p$ and $u_n\imply u$ a.e., we have $\int|u|^p<\infty$. Finally for $\gamma\not\in\Gamma_E$ with endpoints $x,y$ we have
\begin{align*}|(u-u_n)(x)-(u-u_n)(y)| &\leq\sum_n^\infty|(u_{k+1}-u_k)(x)-(u_{k+1}-u_k)(y)|
\\&\leq\sum_n^\infty\int_\gamma g_k,\end{align*}
and we get that $\sum_n^\infty g_k$ is a $p$-weak upper gradient of $u-u_n$ (which tends to 0 in $L^p$), and we have $u\in N^{1,p}$ and
\begin{align*}\|u-u_n\|_{N^{1,p}}\leq \|u-u_n\|_p+\|\sum_n^\infty g_k\|_p\imply 0.\end{align*}\end{proof}

\section{Poincaré Inequality}

If there is no relationship between the `space measure' $m$ and the `path measure' $\mu$, most standard results about $N^{1,p}$ cannot be proven. The standard way of relating them is by Poincaré inequality. In our case we will also need a relationship between the `path measure' and the distance function.

We say that $X$ supports a \textbf{$(1,p)$-Poincaré inequality} of exponent $\beta$ if there exists $C>0, \lambda\geq1$ such that for every ball $B$ and every pair $f,\rho$ defined in $B$ such that $f\in L^1(B)$ and $\rho$ is an upper gradient of $f$ in $B$, we have
\begin{align*}\fint_B|f-f_B|dm\leq C\diam(B)^\beta\left(\fint_{\lambda B}\rho^p dm\right)^{1/p}.\end{align*}

In Shanmugalingam's case, this property suffices for proving that Lipschitz functions are dense in $N^{1,p}$. One crucial fact for proving this is that the length of a path is always greater than or equal to the distance between any pair of points over the curve, but in our context this may not be the case. We say that the family $\Gamma^*$ has the \textbf{$\mu$-arc-chord property} with exponent $\beta$ if there exists $C_\mu>0$ such that for every $\gamma\in\Gamma^*$ (and thus for every subpath of that $\gamma$, as $\Gamma^*$ is closed under taking subpaths), we get that
\begin{align*}\diam(Im(\gamma))^\beta\leq C_\mu\mu(Im(\gamma)).\end{align*}

Observe that the usual chord-arc property (see, for instance, \cite{D}) means the opposite inequality: $l(\gamma)\leq Cd(x,y)$ if $\gamma$ is a path joining $x$ and $y$ (which in turn implies $l(\gamma)\sim d(x,y)$, as the reverse inequality $d(x,y)\leq l(\gamma)$ always holds). We do not require this control over the measure of the curves in $\Gamma^*$, but the opposite one (thus we reverse the word order in the definition).

In this section we will prove some results that arise from these properties, and then we will go back to the example $d\mu=\omega d\HH^1$.

First, we will prove a series of lemmas that will give us sufficient conditions for Lipschitz functions to be dense in $N^{1,p}$.

\begin{lem}Let $f$ be $ACC_p$ such that $f|_F=0$ $m$-a.e., for $F$ a closed subset of $X$. If $\rho$ is an upper gradient of $f$, then $\rho\chi_{X\menos F}$ is a $p$-weak upper gradient of $f$.\end{lem}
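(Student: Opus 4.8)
The goal is to show that $\rho\chi_{X\setminus F}$ is a $p$-weak upper gradient of $f$, given that $\rho$ is a genuine upper gradient and $f$ vanishes $m$-a.e.\ on the closed set $F$. The plan is to exhibit a path family of $p$-modulus zero outside of which the defining inequality $|f(x)-f(y)|\leq\int_\gamma\rho\chi_{X\setminus F}$ holds for the endpoints $x,y$ of $\gamma$. The only way the truncated function can fail to be a valid gradient is that the portion of $\rho$ we discarded, namely its values on $F$, was actually being used to control $f$; since $f$ is essentially zero on $F$, this should almost never happen. The main point is that $f$ is $ACC_p$, so on $p$-almost every path $f\circ\gamma_h$ is absolutely continuous, and we may argue pathwise using the Fundamental Theorem of Calculus.

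First I would discard a $p$-null family of bad paths. Let $E=\{x:f(x)\neq 0\}\cap F$, so $m(E)=0$, and recall from the earlier lemma that $Mod_p(\Gamma_E^+)=0$. I would also discard the $ACC_p$-exceptional family, where $f\circ\gamma_h$ fails to be absolutely continuous, together with the family where $\int_\gamma\rho=\infty$ (null when $\rho\in L^p$, handled as in the $ACC_p$ lemma); call the union of all these exceptional families $\Gamma_0$, so $Mod_p(\Gamma_0)=0$. For a path $\gamma\in\Gamma^*\setminus\Gamma_0$ with endpoints $x,y$, the composition $f\circ\gamma_h$ is absolutely continuous on $[0,h]$, and since $\gamma\notin\Gamma_E^+$ we have $\mu(Im(\gamma)\cap F\cap\{f\neq 0\})=0$, which by Theorem on the $\mu$-arc length parametrization translates to $\gamma_h^{-1}(F\cap\{f\neq 0\})$ having Lebesgue measure zero in $[0,h]$.

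Next I would run the pathwise integral estimate. Using absolute continuity and the representation $\int_{\tilde\gamma}g=\int_a^b g\circ\tilde\gamma$ from the earlier theorem, I would write
\begin{align*}
|f(x)-f(y)|\leq\int_0^h\left|(f\circ\gamma_h)'(t)\right|dt.
\end{align*}
The derivative $(f\circ\gamma_h)'$ is controlled by $\rho\circ\gamma_h$ a.e.\ (this is exactly what the upper-gradient inequality for $\rho$ gives along subpaths). The crucial observation is that on the set where $\gamma_h(t)\in F$, either $f$ is zero in a full-measure neighborhood along the path—so $(f\circ\gamma_h)'=0$ a.e.\ there by a.e.\ differentiability of absolutely continuous functions on their level sets—or we are in the null set $\gamma_h^{-1}(F\cap\{f\neq 0\})$ which contributes nothing to the integral. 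Hence the integrand is unaffected by replacing $\rho$ with $\rho\chi_{X\setminus F}$, giving
\begin{align*}
|f(x)-f(y)|\leq\int_0^h(\rho\chi_{X\setminus F})(\gamma_h(t))\,dt=\int_\gamma\rho\chi_{X\setminus F}.
\end{align*}

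I expect the main obstacle to be the pathwise step justifying that $(f\circ\gamma_h)'=0$ almost everywhere on $\gamma_h^{-1}(F)$. The clean way is the standard real-analysis fact that an absolutely continuous (indeed any measurable-derivative) function has vanishing derivative a.e.\ on any level set, applied to the set $\{f\circ\gamma_h=0\}$, which contains $\gamma_h^{-1}(F)$ up to the $\mu$-null, hence Lebesgue-null, discrepancy set $\gamma_h^{-1}(F\cap\{f\neq0\})$. Pinning down that the set $F$ being \emph{closed} guarantees $\gamma_h^{-1}(F)$ is closed (so measurable) and that the ``$f=0$ a.e.\ on $F$'' hypothesis transfers correctly from the $m$-measure to the one-dimensional Lebesgue measure along the path is where the hypotheses must be used carefully; everything else is bookkeeping about assembling the null families and invoking $ACC_p$.
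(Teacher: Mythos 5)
Your route is genuinely different from the paper's: you differentiate $f\circ\gamma_h$ along the path, bound $|(f\circ\gamma_h)'|$ by $\rho\circ\gamma_h$ a.e., and use the level-set fact that an absolutely continuous function has vanishing derivative a.e.\ on $\{f\circ\gamma_h=0\}$; the paper instead never differentiates, but splits each path at the first and last zeros $a,b$ of $f\circ\gamma_h$ and applies the genuine upper-gradient inequality only to the end pieces $\gamma_h|_{[0,a]}$ and $\gamma_h|_{[b,h]}$, whose images meet $F$ only inside the $\mu$-null set $E=F\cap\{f\neq 0\}$, so that $\rho$ and $\rho\chi_{X\setminus F}$ integrate identically over them. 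Most of your bookkeeping is fine (the families $\Gamma_E^+$ and the $ACC_p$-exceptional family, the transfer of $\mu$-null to Lebesgue-null sets via $\mu(\gamma_h(B))=l(B)$ and injectivity, the level-set fact). But there is a genuine gap: you discard the family $\{\gamma:\int_\gamma\rho=\infty\}$ on the grounds that it is $p$-null ``when $\rho\in L^p$'' --- and $\rho\in L^p$ is \emph{not} a hypothesis of the lemma. The statement allows an arbitrary upper gradient, and this generality is actually used: in the density theorem the lemma is applied to $\rho=g+\tilde{C}k$, which need not belong to $L^p$ when $m(X)=\infty$. Without integrability, Lemma \ref{A} gives you nothing, and the step really breaks there: the a.e.\ bound $|(f\circ\gamma_h)'(t)|\leq\rho(\gamma_h(t))$ comes from dividing $|f\circ\gamma_h(t+\delta)-f\circ\gamma_h(t)|\leq\int_t^{t+\delta}\rho\circ\gamma_h$ by $\delta$ and invoking Lebesgue differentiation, which requires $\rho\circ\gamma_h$ to be locally integrable; if $\int_\gamma\rho=\infty$ the right-hand side can be identically $+\infty$, the bound is vacuous, and the pointwise control on the derivative can genuinely fail.

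The gap is repairable inside your framework, and the closedness of $F$ is exactly the ingredient needed. On any path where $\int_\gamma\rho\chi_{X\setminus F}=\infty$ there is nothing to prove, so you may assume $(\rho\chi_{X\setminus F})\circ\gamma_h\in L^1([0,h])$. Since $F$ is closed, $\gamma_h^{-1}(X\setminus F)$ is relatively open in $[0,h]$, so around a.e.\ point of it the subpath inequality only sees values of $\rho$ on $X\setminus F$; running your differentiation argument with the integrable function $(\rho\chi_{X\setminus F})\circ\gamma_h$ on that open set, and using your level-set argument (which needs no integrability) to get $(f\circ\gamma_h)'=0$ a.e.\ on $\gamma_h^{-1}(F)$, the Fundamental Theorem of Calculus then yields
\begin{align*}
|f(x)-f(y)|\leq\int_0^h\left|(f\circ\gamma_h)'\right|dt\leq\int_0^h(\rho\chi_{X\setminus F})\circ\gamma_h\,dt=\int_\gamma\rho\chi_{X\setminus F}.
\end{align*}
Alternatively, adopt the paper's splitting argument, which avoids differentiation altogether and requires no finiteness of $\int_\gamma\rho$ on any path.
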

\begin{proof}Let $\Gamma_0$ be the set of paths for which $f\circ\gamma_h$ is not absolutely continuous, and let $E=\{x\in F: f(x)\neq0\}$, so $Mod_p(\Gamma_0\cup\Gamma_E^+)=0$. Now, if $\gamma\not\in \Gamma_0\cup\Gamma_E^+$ has endpoints $x,y$,
\begin{itemize}
	\item If $ Im(\gamma)\subset (X\menos F)\cup E$, then $|f(x)-f(y)|\leq\int_\gamma\rho=\int_\gamma\rho\chi_{X\menos F}$ as $\mu( Im(\gamma)\cap E)=0$.
	\item If $x,y\in F\menos E$, then $f(x)=f(y)=0$ and $|f(x)-f(y)|\leq\int_\gamma\rho\chi_{X\menos F}$ holds trivially.
	\item If $x\in (X\menos F)\cup E$ (or the same for $y$) but $ Im(\gamma)$ is not completely in $(X\menos F)\cup E$, as $(f\circ\gamma_h)^{-1}(\{0\})$ is a closed set of $[0,h(\gamma)]$ ($f\circ\gamma_h$ is continuous), it has a minimum $a$ and maximum $b$ (with $f\circ\gamma_h(a)=f\circ\gamma_h(b)=0$). Then,
	\begin{align*}|f(x)-f(y)|\leq\\
	& \hspace{-1.5cm}\leq|f(x)-f(\gamma_h(a))|+|f(\gamma_h(a))-f(\gamma_h(b))|+|f(\gamma_h(b))-f(y)|\\
	& \hspace{-1.5cm}\leq\int_{\gamma_h|_{[0,a]}}\rho+\int_{\gamma_h|_{[b,h(\gamma)]}}\rho\leq\int_\gamma\rho\chi_{X\menos F}\end{align*} 
	as $\gamma_h([0,a])$ and $\gamma_h([b,h(\gamma)])$ intersect $F$ in a set of $\mu$-measure zero.\end{itemize}\end{proof}

\begin{lem}If $\Gamma^*$ has the $\mu$-arc-chord property with exponent $\beta$, then every Lipschitz-$\beta$ function is absolutely continuous over every curve of $\Gamma^*$.\end{lem}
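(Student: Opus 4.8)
The plan is to prove something slightly stronger than absolute continuity: I will show that for each $\gamma\in\Gamma^*$ the composition $f\circ\gamma_h:[0,h(\gamma)]\imply\RR$, where $\gamma_h$ denotes the $\mu$-arc length parametrization, is in fact Lipschitz on $[0,h(\gamma)]$, and Lipschitz functions on an interval are absolutely continuous. Throughout I read ``Lipschitz-$\beta$'' as the H\"older-type condition $|f(x)-f(y)|\leq L\,d(x,y)^\beta$ for some constant $L$, so that the exponent matches the $\beta$ appearing in the $\mu$-arc-chord property. The whole argument is a chain of three elementary bounds plus the measure-preserving property of the arc-length parametrization.

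First I would fix $0\leq s<t\leq h(\gamma)$ and estimate the increment of $f\circ\gamma_h$. The H\"older hypothesis gives
\begin{align*}|f(\gamma_h(t))-f(\gamma_h(s))|\leq L\,d(\gamma_h(s),\gamma_h(t))^\beta.\end{align*}
Since both $\gamma_h(s)$ and $\gamma_h(t)$ lie in $Im(\gamma_h|_{[s,t]})$, the chord is dominated by the diameter, $d(\gamma_h(s),\gamma_h(t))\leq\diam(Im(\gamma_h|_{[s,t]}))$. Next I invoke the $\mu$-arc-chord property on the subpath $\gamma_h|_{[s,t]}$, which belongs to $\Gamma^*$ precisely because $\Gamma^*$ is closed under taking subpaths; this yields $\diam(Im(\gamma_h|_{[s,t]}))^\beta\leq C_\mu\,\mu(Im(\gamma_h|_{[s,t]}))$. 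Finally, the bridge is the earlier theorem asserting that an arc-length parametrized path sends Borel sets to sets of equal $\mu$-measure, so $\mu(Im(\gamma_h|_{[s,t]}))=\mu(\gamma_h([s,t]))=t-s$. Combining the three inequalities gives
\begin{align*}|f(\gamma_h(t))-f(\gamma_h(s))|\leq L\,C_\mu\,(t-s),\end{align*}
so $f\circ\gamma_h$ is $LC_\mu$-Lipschitz and hence absolutely continuous on $[0,h(\gamma)]$.

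There is no real obstacle here; the only points needing care are that the arc-chord property must be applied to the \emph{subpath} $\gamma_h|_{[s,t]}$ (whence the requirement that $\Gamma^*$ be subpath-closed), and that the Lebesgue length $t-s$ equals the $\mu$-measure of the corresponding arc exactly because $\gamma_h$ is the $\mu$-arc length parametrization. As an alternative phrasing of the same estimate, one could observe that $\rho\equiv LC_\mu$ serves as a (constant, finite-on-arcs) upper gradient along $\gamma$ and conclude via Theorem \ref{ac}, but the direct Lipschitz computation is cleaner and self-contained.
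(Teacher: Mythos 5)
Your proof is correct and is essentially the paper's own argument: the identical chain of estimates (Lipschitz-$\beta$ bound, chord $\leq$ diameter, $\mu$-arc-chord property applied to the subpath, and $\mu(\gamma_h([s,t]))=t-s$ from the arc-length parametrization) appears there, only applied to finitely many disjoint intervals at once to verify the $\epsilon$--$\delta$ definition of absolute continuity directly, whereas you package it as the marginally stronger statement that $f\circ\gamma_h$ is $LC_\mu$-Lipschitz and then invoke that Lipschitz functions on an interval are absolutely continuous. Either phrasing closes the argument, so nothing further is needed.
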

\begin{proof}Let $\gamma:[0,h]\imply X$ be a path in $\Gamma^*$ parametrized by $\mu$-arc length, and let $f:X\imply\RR$ be Lipschitz with constant $L$. If $\epsilon>0$ and $0\leq a_1<b_1<a_2<b_2<\cdots<a_n<b_n\leq h$ satisfies $\sum_i|b_i-a_i|<\frac{\epsilon}{LC_\mu}$, then
\begin{align*}\sum_i|f(\gamma(b_i))-f(\gamma(a_i))| &\leq L\sum_i d(\gamma(b_i),\gamma(a_i))^\beta\\
&\leq L\sum_i \diam(\gamma([a_i,b_i]))^\beta\\
&\leq LC_\mu\sum_i \mu(\gamma([a_i,b_i]))=LC_\mu\sum_i|b_i-a_i|\\
&<\epsilon.\end{align*}\end{proof}

\begin{lem}If $\Gamma^*$ has the $\mu$-arc-chord property with exponent $\beta$ and $f:X\imply\RR$ is a Lipschitz-$\beta$ function with constant $L$, then $C_\mu L\chi_{supp(f)}$ is an upper gradient of $f$. In particular if $supp(f)$ is compact we have $f\in\tilde{N}^{1,p}$.\end{lem}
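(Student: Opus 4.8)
The plan is to verify directly the defining property of an upper gradient with respect to $\Gamma^*$, namely that $|f(x)-f(y)|\leq\int_\gamma C_\mu L\chi_{supp(f)}$ for every $\gamma\in\Gamma^*$ with endpoints $x,y$, and then to deduce membership in $\tilde N^{1,p}$ from compactness. Writing $K=supp(f)=\overline{\{f\neq0\}}$ and recalling that $\int_\gamma C_\mu L\chi_K=C_\mu L\,\mu(Im(\gamma)\cap K)$, the entire content of the first assertion is the inequality
\[|f(x)-f(y)|\leq C_\mu L\,\mu(Im(\gamma)\cap K).\]
Since this quantity is parametrization-independent, I would fix $\gamma:[0,h]\imply X$ parametrized by $\mu$-arc length, so that $x=\gamma(0)$ and $y=\gamma(h)$.

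The naive estimate $|f(x)-f(y)|\leq Ld(x,y)^\beta\leq L\diam(Im(\gamma))^\beta\leq C_\mu L\,\mu(Im(\gamma))$ only yields the constant function $C_\mu L$ (i.e. $C_\mu L\chi_X$) as an upper gradient; the real issue is localizing to $K$. To do this I would examine the continuous function $g=f\circ\gamma:[0,h]\imply\RR$ and its zero set $Z=\{t:g(t)=0\}$. If $Z=\emptyset$, then $f$ never vanishes on $Im(\gamma)$, hence $Im(\gamma)\subset K$ and the naive estimate already gives the claim. Otherwise I set $a=\min Z$ and $b=\max Z$ (these exist since $Z$ is closed, nonempty, and $g$ is continuous), so that $g(a)=g(b)=0$ and $a\leq b$. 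The key cancellation is then
\[|f(x)-f(y)|\leq|g(0)-g(a)|+|g(a)-g(b)|+|g(b)-g(h)|=|g(0)-g(a)|+|g(b)-g(h)|,\]
since the middle term vanishes: whatever the path does on $[a,b]$ is irrelevant.

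It remains to estimate the two surviving terms and to check that the relevant sub-paths lie in $K$. For $t<a$ one has $g(t)\neq0$, hence $\gamma(t)\in\{f\neq0\}$, and by continuity $\gamma(a)\in\overline{\{f\neq0\}}=K$; thus $\gamma([0,a])\subset Im(\gamma)\cap K$, and symmetrically $\gamma([b,h])\subset Im(\gamma)\cap K$. Applying the Lipschitz-$\beta$ bound together with the $\mu$-arc-chord property to each end sub-path gives
\[|g(0)-g(a)|\leq Ld(\gamma(0),\gamma(a))^\beta\leq L\diam(\gamma([0,a]))^\beta\leq C_\mu L\,\mu(\gamma([0,a])),\]
and likewise $|g(b)-g(h)|\leq C_\mu L\,\mu(\gamma([b,h]))$. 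Because $\gamma$ is injective and $a\leq b$, the sets $\gamma([0,a])$ and $\gamma([b,h])$ meet in at most the single point $\gamma(a)$, which has $\mu$-measure zero ($\mu$ being non-atomic); hence their measures add and
\[\mu(\gamma([0,a]))+\mu(\gamma([b,h]))=\mu\big(\gamma([0,a])\cup\gamma([b,h])\big)\leq\mu(Im(\gamma)\cap K).\]
Summing the two bounds yields the desired inequality, so $C_\mu L\chi_K$ is an upper gradient, a fortiori a $p$-weak upper gradient.

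For the final assertion, suppose $K=supp(f)$ is compact. Then $K$ is bounded, hence contained in a ball of finite $m$-measure, so $m(K)<\infty$; being continuous, $f$ is bounded on $K$ and zero off $K$, whence $\|f\|_p^p\leq\|f\|_\infty^p\,m(K)<\infty$, and the upper gradient just constructed satisfies $\|C_\mu L\chi_K\|_p^p=(C_\mu L)^p m(K)<\infty$. Thus both $f$ and an upper gradient of $f$ lie in $L^p$, i.e. $f\in\tilde N^{1,p}$. I expect the main obstacle to be exactly the localization step: obtaining $\chi_K$ rather than $\chi_X$ forces one to discard the portion of $\gamma$ between the first and last zeros of $f\circ\gamma$ and to control the two boundary excursions by \emph{disjoint} pieces of $Im(\gamma)\cap K$ without double counting, which is precisely where the arc-chord property and the non-atomicity of $\mu$ do the work.
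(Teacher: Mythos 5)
Your proof is correct and takes essentially the same approach as the paper's: split $f\circ\gamma$ at the first and last zeros, discard the vanishing middle term, and control the two end sub-paths (which lie in $supp(f)$) via the Lipschitz-$\beta$ bound combined with the $\mu$-arc-chord property. You are in fact slightly more careful than the paper, making explicit both the trivial case where $f\circ\gamma$ never vanishes and the non-atomicity argument that lets the measures of the two end pieces add up inside $\mu(Im(\gamma)\cap supp(f))$.
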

\begin{proof}Let $\gamma:[a,b]\imply X$ have endpoints $x,y$. Consider the following cases:
\begin{itemize}
	\item $ Im(\gamma)\subset supp(f)$. Then $|f(x)-f(y)|\leq Ld(x,y)^\beta\leq C_\mu L\mu( Im(\gamma))=\int_\gamma LC=\int_\gamma CL\chi_{supp(f)}$.
	\item $ Im(\gamma)\cap supp(f)=\emptyset$. Then $|f(x)-f(y)|=0=\int_\gamma CL\chi_{supp(f)}$.
	\item $x\in supp(f)$ but $ Im(\gamma)\not\subset supp(f)$. Then as $(f\circ\gamma)^{-1}(\{0\})$ is closed in $[a,b]$, it has minimum $a_0>a$ and maximum $b_0\leq b$. We have that $\gamma([a,a_0])$ and $\gamma([b_0,b])$ are subsets of $supp(f)$ and $f(\gamma(a_0))=f(\gamma(b_0))=0$ so,
	\begin{align*}|f(x)-f(y)|\leq\\
	&\hspace{-1.5cm}\leq|f(x)-f(\gamma(a_0))|+|f(\gamma(a_0))-f(\gamma(b_0))|+|f(\gamma(b_0))-f(y)|\\ 
	&\hspace{-1.5cm}\leq Ld(x,\gamma(a_0))^\beta + Ld(\gamma(b_0),y)^\beta\\
	&\hspace{-1.5cm}\leq LC_\mu\mu(\gamma([a,a_0])) + LC_\mu\mu(\gamma([b_0,b])\\
	&\hspace{-1.5cm}\leq\int_\gamma LC_\mu\chi_{supp(f)}.\end{align*}
\end{itemize}
Finally if $supp(f)$ is compact, $f,CL\chi_{supp(f)}\in L^p(m)$ for every $p$.
\end{proof}

With the previous results, and also requiring the measure $m$ to be doubling, we get the following.

\begin{thm}If $m$ is doubling, $X$ supports a $(1,p)$-Poincaré inequality of exponent $\beta\leq 1$ and $\Gamma^*$ satisfies the $\mu$-arc-chord property with exponent $\beta$, then Lipschitz-$\beta$ functions are dense in $N^{1,p}$.\end{thm}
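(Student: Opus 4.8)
The plan is to run the classical maximal-function argument of Haj\l{}asz--Koskela and Shanmugalingam, adapted to the measure $m$ and to the exponent $\beta$. Fix $f\in N^{1,p}$ with a $p$-weak upper gradient $\rho\in L^p(m)$, which I may take to be a genuine upper gradient at the cost of an arbitrarily small error in $L^p$ by the earlier proposition on upgrading weak upper gradients; let $\MM$ denote the Hardy--Littlewood maximal operator associated to $m$, which is of weak type $(1,1)$ because $m$ is doubling. First I would derive a pointwise Haj\l{}asz-type estimate by a telescoping argument: for $m$-a.e.\ pair of Lebesgue points $x,y$ with $d(x,y)=r$, chaining the Poincar\'e inequality over the balls $B(x,2^{-i}r)$ and using doubling to compare consecutive averages gives
\begin{align*}
|f(x)-f_{B(x,r)}|\leq C\, r^\beta\left(\MM(\rho^p)(x)\right)^{1/p},
\end{align*}
and, comparing $f(x)$ and $f(y)$ through the average on $B(x,2r)$, the symmetric inequality
\begin{align*}
|f(x)-f(y)|\leq C\, d(x,y)^\beta\left[\left(\MM(\rho^p)(x)\right)^{1/p}+\left(\MM(\rho^p)(y)\right)^{1/p}\right].
\end{align*}
Write $g=\left(\MM(\rho^p)\right)^{1/p}$.

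Next I would build good sets. For $\lambda>0$ set $E_\lambda=\{x: g(x)\leq\lambda\}$, restricted to Lebesgue points of $f$; the displayed estimate shows $f|_{E_\lambda}$ is Lipschitz-$\beta$ with constant $2C\lambda$. Since $\beta\leq1$, the map $(x,y)\mapsto d(x,y)^\beta$ is itself a metric, so the McShane-type extension
\begin{align*}
f_\lambda(x)=\inf_{z\in E_\lambda}\left(f(z)+2C\lambda\, d(x,z)^\beta\right)
\end{align*}
produces $f_\lambda:X\imply\RR$ that is globally Lipschitz-$\beta$ with constant $2C\lambda$ and coincides with $f$ on $E_\lambda$; this is exactly where the hypothesis $\beta\leq1$ is indispensable. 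The weak-type maximal inequality gives $m(X\menos E_\lambda)\leq C\lambda^{-p}\|\rho\|_p^p\imply0$.

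I would then prove $f_\lambda\imply f$ in $N^{1,p}$ using the three preceding lemmas. By the lemma that Lipschitz-$\beta$ functions are absolutely continuous along every curve of $\Gamma^*$, and since $f$ is $ACC_p$ (it has a $p$-weak upper gradient in $L^p$), the difference $f-f_\lambda$ is $ACC_p$; moreover it vanishes $m$-a.e.\ on the closed set $F=\overline{E_\lambda}$, because $m$-a.e.\ point of $\overline{E_\lambda}$ is a density point of $E_\lambda$ where the continuous $f_\lambda$ equals $f$. The lemma giving $C_\mu(2C\lambda)\chi_{supp(f_\lambda)}$ as an upper gradient of $f_\lambda$ shows $\rho+2CC_\mu\lambda$ is an upper gradient of $f-f_\lambda$, and the lemma converting a function vanishing on $F$ into one with upper gradient supported off $F$ then yields that $(\rho+2CC_\mu\lambda)\chi_{X\menos E_\lambda}$ is a $p$-weak upper gradient of $f-f_\lambda$. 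Hence
\begin{align*}
\|f-f_\lambda\|_{N^{1,p}}\leq\|f-f_\lambda\|_p+\left(\int_{X\menos E_\lambda}\left(\rho+2CC_\mu\lambda\right)^p dm\right)^{1/p},
\end{align*}
and the $L^p$ term together with the contribution $\int_{X\menos E_\lambda}\rho^p\,dm$ vanish as $\lambda\imply\infty$ by absolute continuity of the integral, since $m(X\menos E_\lambda)\imply0$.

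The main obstacle is the surviving tail $\lambda^p m(X\menos E_\lambda)=\lambda^p m(\{g>\lambda\})$. The weak-type bound controls this quantity only uniformly, not to zero; one has $\lambda^p m(\{g>\lambda\})\leq\int_{\{g>\lambda\}}g^p\,dm\imply0$ precisely when $g\in L^p(m)$, i.e.\ when $\MM(\rho^p)\in L^1(m)$, which fails for a generic $L^1$ datum. The standard remedy is the \emph{self-improvement} of the Poincar\'e inequality: a $(1,p)$-inequality upgrades to a $(1,q)$-inequality for some $q<p$, so the telescoping may be run with $\rho^q$ in place of $\rho^p$, producing $g=\left(\MM(\rho^q)\right)^{1/q}$ with $\MM(\rho^q)\in L^{p/q}(m)$ by boundedness of $\MM$ on $L^{p/q}$ (as $p/q>1$); then $g\in L^p(m)$ and the tail vanishes. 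Establishing or assuming this self-improvement in the present non-length, $\mu$-weighted setting --- where the curve-chaining underlying the Keith--Zhong argument need not be available --- is the delicate point on which the density statement ultimately rests.
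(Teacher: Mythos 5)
Your construction is essentially the paper's own: telescope the Poincar\'e inequality against the Hardy--Littlewood maximal function of $\rho^p$, obtain that $f$ is Lipschitz-$\beta$ with constant $\sim\lambda$ on the good set where $M(\rho^p)\leq\lambda^p$, extend by McShane, and control $f-f_\lambda$ in $N^{1,p}$ via the three preparatory lemmas. The genuine problem is your final paragraph: the ``surviving tail'' $\lambda^p m(\{M(\rho^p)>\lambda^p\})$ is not an obstacle, and no self-improvement of the Poincar\'e inequality (Keith--Zhong) is needed --- which is fortunate, since as you note such self-improvement is not available in this setting. By declaring the proof to rest on that unavailable ingredient, your argument as written stops short of proving the theorem.

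The elementary fix is the standard truncation trick, and it is exactly how the paper closes its proof. Split $\rho^p=\rho^p\chi_{\{\rho^p\leq\lambda^p/2\}}+\rho^p\chi_{\{\rho^p>\lambda^p/2\}}$. Since $M\left(\rho^p\chi_{\{\rho^p\leq\lambda^p/2\}}\right)\leq\lambda^p/2$ everywhere, sublinearity gives the inclusion
\begin{align*}
\left\{M(\rho^p)>\lambda^p\right\}\subset\left\{M\left(\rho^p\chi_{\{\rho^p>\lambda^p/2\}}\right)>\lambda^p/2\right\},
\end{align*}
and the weak $(1,1)$ bound applied to the truncated function yields
\begin{align*}
\lambda^p\, m\left(\left\{M(\rho^p)>\lambda^p\right\}\right)\leq C\int_{\{\rho^p>\lambda^p/2\}}\rho^p\, dm\imply 0\quad\text{as }\lambda\imply\infty,
\end{align*}
because $\rho^p\in L^1(m)$ and the sets $\{\rho^p>\lambda^p/2\}$ shrink to a null set (dominated convergence). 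This is precisely the paper's estimate $k^pm(E_k)\leq k^p m\left(M\left(g^p\chi_{\{g^p>k^p/2\}}\right)>k^p/2\right)\leq C\int_{\{g^p>k^p/2\}}g^p\imply 0$. With this one-line insertion both the $L^p$ convergence $\|f-f_\lambda\|_p\imply0$ and the upper-gradient term $\int_{X\menos E_\lambda}(\rho+C\lambda)^p dm\imply0$ follow, and your proof is complete and coincides in substance with the paper's.
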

\begin{proof} Let $f\in\tilde{N}^{1,p}$ and let $g\in L^p$ be an upper gradient of $f$. Assume $f$ is bounded (bounded functions are clearly dense in $N^{1,p}$). We define
\begin{align*}E_k=\{x\in X:Mg^p(x)>k^p\},\end{align*}
where $M$ is the noncentered Hardy-Littlewood maximal function. As $m$ is doubling, $M$ is weak type $1,1$, and
\begin{align*}m(E_k)\leq \frac{C}{k^p}\int_X g^p\imply 0\quad\text{ as }\quad k\imply\infty.\end{align*}
Let $F_k=X\menos E_k$ (which is closed as $E_k$ is open). If $x\in F_k$, $r>0$ and $B=B(x,r)$, 
\begin{align*}\fint_B|f-f_B|\leq C r^\beta(\fint_B g^p)^{1/p}\leq Cr^\beta (Mg^p(x))^{1/p}\leq Cr^\beta k.\end{align*} 
Then if we define $f_n(x)=f_{B(x,2^{-n}r)}$, we have
\begin{align*}|f_{n+j}(x)-f_n(x)| &\leq \sum_{i=1}^j|f_{n+i+1}(x)-f_{n+i}(x)|\\
&\leq \sum_{i=1}^j \fint_{B(x,2^{-(n+i+1)}r)}|f-f_{B(x,2^{-(n+i)}r)}|\\
&\leq C\sum_{i=1}^j\fint_{B(x,2^{-(n+i)}r)}|f-f_{B(x,2^{-(n+i)}r)}|\\
&\leq Ckr^\beta (2^\beta)^{-n}\sum_{i=1}^j 2^{-i}\leq Ckr^\beta 2^{-n\beta},\end{align*}
and therefore $f_n(x)$ is Cauchy for each $x\in F_k$. Now, we define for $x\in F_k$,
\begin{align*}f^k(x)=\lim f_n(x).\end{align*}
Observe that for Lebesgue points of $f$ in $F_k$ we have $f^k(x)=f(x)$. Let's verify that $f^k$ is Lipschitz-$\beta$.
Given $x,y\in F_k$, take $r=d(x,y)$, $B_n=B(x,2^{-n}r)$, $B'_n=B(y,2^{-n}r)$, and
\begin{align*}|f^k(x)-f^k(y)| &\leq\\
&\hspace{-2cm}\leq \sum_{n=0}^\infty |f_n(x)-f_{n+1}(x)| + |f_0(x)-f_0(y)| + \sum_{n=0}^\infty |f_{n}(y)-f_{n+1}(y)|\\
&\hspace{-2cm}\leq \sum_{n=0}^\infty C\fint_{B_n}|f-f_{B_n}| + C\fint_{2B_0}|f-f_{2B_0}| + \sum_{n=0}^\infty C\fint_{B'_n}|f-f_{B'_n}|\\
&\hspace{-2cm}\leq Ckr^\beta\sum_{n=0}^\infty 2^{-n\beta} + Cr^\beta k \leq Ckr^\beta= Ckd(x,y)^\beta.\end{align*}
Now, $f^k$ can be extended to all of $X$ as a Lipschitz-$\beta$ function with the same Lipschitz constant, and we can assume it is bounded by $Ck$ (see \cite{A}). Then
\begin{align*}\int_X|f-f^k|^p=\int_{E_k}|f-f^k|^p\leq C\int_{E_k}|f|^p + Ck^p m(E_k)\imply 0\end{align*}
as $k\imply\infty$, for $m(E_k)\imply 0$ and the weak type of the Hardy-Littlewood maximal implies
\begin{align*}k^pm(E_k)&=k^pm(M(g^p)>k^p)\leq k^p m(M(g^p\chi_{\{g^p>k^p/2\}})>k^p/2)\\
&\leq C\int_{\{g^p>k^p/2\}} g^p\imply 0.\end{align*}

So $f^k$ tends to $f$ in $L^p$. As $f$ y $f^k$ are $ACC_p$, $(g+\tilde{C}k)\chi_{E_k}$ is a $p$-weak upper gradient of $f-f^k$, and as it is in $L^p$ and tends to 0 when $k\imply\infty$, $f-f^k\in N^{1,p}$ for every $k$ and $\|f-f^k\|_{N^{1,p}}\imply0$.\end{proof}

If $X$ is doubling and supports a $(1,q)$ Poincaré inequality of exponent $\beta$ for some $1\leq q<p$, then we have that every function in $N^{1,p}$ has a Haj\symbol{170}asz gradient in $L^p$, i.e. $N^{1,p}\embeds M^{\beta,p}$ with $\|\cdot\|_{M^{\beta,p}}\leq C\|\cdot\|_{N^{1,p}}$ (see \cite{Ha}, \cite{KM}, \cite{Sh}, we define $M^{\beta,p}$ to be the space $M^{1,p}$ for the metric $d^\beta$). The converse embedding holds true in general for Shanmugalingam's case. In our case we need the $\mu$-arc-chord property.

\begin{lem}Assume $\Gamma^*$ satisfies the $\mu$-arc-chord property and let $f$ be a continuous function satisfying
\begin{align*}|f(x)-f(y)|\leq d(x,y)^\beta(g(x)+g(y))\end{align*}
for every $x,y$, for some nonnegative measurable function $g$. Then there exists $C>0$ such that $Cg$ is an upper gradient for $f$.\end{lem}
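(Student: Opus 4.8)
The plan is to show that the pointwise Hajłasz-type inequality controls the increment of $f$ over every path in $\Gamma^*$ by the $\mu$-integral of a constant multiple of $g$, so that $Cg$ qualifies as an upper gradient. Fix a path $\gamma\in\Gamma^*$ with endpoints $x,y$. The hypothesis immediately gives
\begin{align*}|f(x)-f(y)|\leq d(x,y)^\beta(g(x)+g(y))\leq \diam(Im(\gamma))^\beta(g(x)+g(y)),\end{align*}
and the $\mu$-arc-chord property turns this into $|f(x)-f(y)|\leq C_\mu\mu(Im(\gamma))(g(x)+g(y))$. The difficulty is that this bound involves only the endpoint values $g(x)+g(y)$, whereas $\int_\gamma Cg=\int_{Im(\gamma)}Cg\,d\mu$ is an integral over the whole image; a single pair of endpoints need not see enough of the mass of $g$ along $\gamma$.

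**The standard device** to close this gap is a dyadic subdivision of the path together with a telescoping sum. I would pick points $x=z_0,z_1,\ldots,z_N=y$ along $\gamma$ (in the $\mu$-arc length parametrization, so that $\mu(Im(\gamma|_{[z_{i},z_{i+1}]}))$ is a controlled fraction of $\mu(Im(\gamma))$) and write
\begin{align*}|f(x)-f(y)|\leq\sum_{i=0}^{N-1}|f(z_i)-f(z_{i+1})|\leq\sum_{i=0}^{N-1}d(z_i,z_{i+1})^\beta(g(z_i)+g(z_{i+1})).\end{align*}
Applying the $\mu$-arc-chord property to each subpath gives $d(z_i,z_{i+1})^\beta\leq C_\mu\mu(Im(\gamma|_{[z_i,z_{i+1}]}))$, and as the mesh is refined the factors $\mu(Im(\gamma|_{[z_i,z_{i+1}]}))$ shrink to zero while $\sum_i\mu(Im(\gamma|_{[z_i,z_{i+1}]}))=\mu(Im(\gamma))$. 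The telescoped sum $\sum_i\mu(Im(\gamma|_{[z_i,z_{i+1}]}))(g(z_i)+g(z_{i+1}))$ is then a Riemann-type sum for $2\int_{Im(\gamma)}g\,d\mu$, and continuity of $f$ guarantees that the left-hand chain of differences genuinely captures $|f(x)-f(y)|$ in the limit.

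**The main obstacle** I expect is making the passage from the discrete telescoping sum to the integral $\int_{Im(\gamma)}g\,d\mu$ rigorous when $g$ is merely measurable rather than continuous, since then pointwise values $g(z_i)$ need not converge to integral averages. The clean way around this is to work on the arc-length parametrization, where by the earlier result (Theorem on $\mu$-arc length parametrization) $\int_{\gamma|_{[s,t]}}g=\int_s^t g\circ\gamma$, reducing the question to a one-dimensional statement: a continuous $\phi=f\circ\gamma$ on $[0,h]$ satisfying $|\phi(s)-\phi(t)|\leq C_\mu|t-s|(\psi(s)+\psi(t))$ with $\psi=g\circ\gamma\in L^1$ obeys $|\phi(0)-\phi(h)|\leq 2C_\mu\int_0^h\psi$. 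I would prove this last inequality by partitioning $[0,h]$ into equal subintervals, telescoping, and invoking the Lebesgue differentiation theorem (or simply the fact that the Riemann sums of the $L^1$ function $\psi$ converge in the appropriate averaged sense) to pass to the limit, yielding $C=2C_\mu$ as the constant for which $Cg$ is an upper gradient for $f$ with respect to $\Gamma^*$.
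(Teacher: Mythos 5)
Your skeleton---reparametrize by $\mu$-arc length, cut the path into subpaths of equal $\mu$-measure, telescope the pointwise inequality, and apply the $\mu$-arc-chord property to each piece---is exactly the paper's. But the step you yourself flag as ``the main obstacle'' is a genuine gap, and neither tool you name closes it. The Lebesgue differentiation theorem controls averages of $\psi=g\circ\gamma$ centered at an (almost every, but unspecified) point as the radius shrinks; it does not control the values $\psi(t_i)$ at the partition points of a mesh you are refining. And Riemann sums of an $L^1$ function with prescribed tags genuinely need not converge to the integral: nothing in the hypothesis prevents $g$ from equaling $+\infty$ on a dense $\mu$-null set (enlarging $g$ on a null set weakens nothing in $|f(x)-f(y)|\leq d(x,y)^\beta(g(x)+g(y))$ and changes no integral $\int_\gamma g$), in which case every partition of every mesh can have all of its tags---including your fixed tags $z_0=x$ and $z_N=y$---landing where the summand is infinite. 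So ``pass to the limit'' as written is not a step of a proof; it is the entire difficulty.

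The missing idea is an elementary tag selection, and it is precisely what the paper does. Assuming $\int_\gamma g<\infty$ (otherwise there is nothing to prove), on each subpath $\gamma_i$ of measure $h/n$ there exists $x_i\in Im(\gamma_i)$ with $g(x_i)\leq\fint_{\gamma_i}g$, simply because a function of finite integral cannot be strictly larger than its mean at every point of a set of positive measure. With these tags, $d(x_i,x_{i+1})^\beta\leq C_\mu\cdot 2h/n$ by arc-chord, so each telescoped term is at most $2C_\mu\bigl(\int_{\gamma_i}g+\int_{\gamma_{i+1}}g\bigr)$, and the whole sum is bounded by $C\int_\gamma g$ \emph{for every} $n$---no convergence theorem is invoked at all. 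The limit $n\to\infty$ enters only to recover the endpoints: the first and last tags satisfy $x_0\to x$ and $x_{n-1}\to y$ (their subpaths have measure $h/n$, hence diameter tending to $0$ by arc-chord), and continuity of $f$---this is exactly where that hypothesis is used---gives $|f(x)-f(y)|\leq C\int_\gamma g$. Note that this also sidesteps ever evaluating $g$ at $x$ or $y$, where it may be infinite, which your fixed choice $z_0=x$, $z_N=y$ cannot do. (If you want to make your ``averaged sense'' remark precise, the workable version is that the average over offsets $t\in[0,h/n]$ of the equally spaced Riemann sum of $\psi$ equals $\int_0^h\psi$, so some offset gives a sum bounded by the integral; that is just another form of the same selection argument, and it still requires the endpoint limit via continuity. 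Your specific constant $2C_\mu$ is immaterial, as the lemma asks only for some $C>0$.)
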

\begin{proof}Let $\gamma:[0,h]\imply X$ be a path in $\Gamma^*$ parametrized by $\mu$-arc length with endpoints $x,y$. If $\int_\gamma g=\infty$ we are done. Otherwise, for each $n$ we take $\gamma_i=\gamma|_{\left[\frac{i}{n},\frac{i+1}{n}\right]}$, $0\leq i\leq n-1$, as $\gamma$ is a $\mu$-arc length parametrization we have that $\mu(|\gamma_i|)=\mu( Im(\gamma))/n=h/n$. For each $i$, there exists $x_i\in |\gamma_i|$ with
$g(x_i)\leq \fint_{\gamma_i}g$, and the $\mu$-arc-chord property implies that $d(x_i,x_{i+1})^\beta\leq C\mu(|\gamma_i|)$, then
\begin{align*}|f(x_1)-f(x_{n-1})| &\leq\sum_i|f(x_i)-f(x_{i+1})|\\
&\leq \sum_i d(x_i,x_{i+1})^\beta(g(x_i)+g(x_{i+1}))\\
&\leq C\sum_i \left(\int_{\gamma_i}g + \int_{\gamma_{i+1}}g\right)\\
&\leq C\int_\gamma g.\end{align*}
Taking $n\imply\infty$, $x_0\imply x, x_{n-1}\imply y$ and
\begin{align*}|f(x)-f(y)|\leq C\int_\gamma g\end{align*}
and we have what we needed.\end{proof}

\begin{coro}If $\Gamma^*$ satisfies the $\mu$-arc-chord property  with exponent $\beta$ and continuous functions are dense in $M^{\beta,p}$ (which happens for instance if $\beta\leq 1$), then $M^{1,p}\embeds N^{1,p}$, with $\|\cdot\|_{N_{1,p}}\leq C\|\cdot\|_{M^{\beta,p}}$.\end{coro}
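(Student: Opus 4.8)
The plan is to reduce the whole statement to the preceding lemma by a density-plus-completeness argument, exploiting that $N^{1,p}$ is already known to be Banach. I would split the proof into an estimate on the dense class of continuous functions and an extension to all of $M^{\beta,p}$.

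First I would treat a continuous $f\in M^{\beta,p}$ directly. Let $g\in L^p$ be a Haj\symbol{170}asz gradient for $f$ with respect to $d^\beta$, so there is a null set $E$ (with $m(E)=0$) such that $|f(x)-f(y)|\le d(x,y)^\beta(g(x)+g(y))$ for all $x,y\in X\menos E$. I would redefine $g$ to be $+\infty$ on $E$; this leaves $\|g\|_p$ unchanged and now makes the inequality hold for \emph{every} pair $x,y$, since the right-hand side is infinite whenever $x$ or $y$ lies in $E$ and is the hypothesis otherwise. Then $f$ satisfies the hypotheses of the previous lemma, so $Cg$ is an upper gradient of $f$. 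The value $+\infty$ on a null set causes no trouble in that lemma: along any path $\gamma$ with $\int_\gamma g<\infty$ the sampled points $x_i$ chosen by $g(x_i)\le\fint_{\gamma_i}g$ necessarily have finite $g$-value and hence lie outside $E$, so the pointwise bound is invoked only at legitimate points. This gives $f\in\tilde N^{1,p}$ and, taking the infimum over Haj\symbol{170}asz gradients,
\[\|f\|_{N^{1,p}}=\|f\|_p+\inf_\rho\|\rho\|_p\le \|f\|_p+C\inf_g\|g\|_p\le C'\|f\|_{M^{\beta,p}},\]
which is exactly the claimed estimate on continuous functions.

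Next I would pass to an arbitrary $f\in M^{\beta,p}$ using the assumed density of continuous functions. I would pick continuous $f_n\to f$ in $M^{\beta,p}$; since each difference $f_n-f_m$ is continuous and lies in $M^{\beta,p}$, the estimate just proved yields $\|f_n-f_m\|_{N^{1,p}}\le C'\|f_n-f_m\|_{M^{\beta,p}}\imply 0$, so $(f_n)$ is Cauchy in $N^{1,p}$. As $N^{1,p}$ is Banach, $f_n$ converges there to some $\tilde f\in N^{1,p}$. Both norms dominate the $L^p$ norm, so $f_n\to\tilde f$ and $f_n\to f$ in $L^p$, forcing $\tilde f=f$ $m$-a.e.; thus $f$ has a representative in $\tilde N^{1,p}$ and defines an element of $N^{1,p}$. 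Passing to the limit in $\|f_n\|_{N^{1,p}}\le C'\|f_n\|_{M^{\beta,p}}$ gives $\|f\|_{N^{1,p}}\le C'\|f\|_{M^{\beta,p}}$, i.e. the inclusion is the bounded linear extension of the map defined on the dense continuous subspace, establishing $M^{\beta,p}\embeds N^{1,p}$ with the stated control.

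The routine parts are the vector-space bookkeeping and the continuity of both embeddings into $L^p$. The step I expect to require the most care is the first one: upgrading the $m$-a.e.\ Haj\symbol{170}asz inequality to an everywhere inequality that genuinely feeds the previous lemma. The device of setting $g=+\infty$ on the exceptional null set resolves this, but one must verify — as indicated above — that the lemma evaluates $g$ only at points of finite path-average, which automatically avoid the exceptional set, so that the pointwise bound is available precisely where it is used.
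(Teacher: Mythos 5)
Your proof is correct and is essentially the paper's intended argument: the corollary is stated without proof precisely because it follows from the preceding lemma applied to continuous functions, combined with the assumed density and the completeness of $N^{1,p}$, exactly as you do. Your device of redefining $g=+\infty$ on the exceptional null set is a careful repair of a point the paper leaves implicit — Haj\symbol{170}asz gradients satisfy their inequality only $m$-a.e., while the lemma is stated for every pair $x,y$ — and your check that the lemma's proof only evaluates $g$ at points of finite path-average (hence outside the null set) makes that repair legitimate.
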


\begin{thm}\label{mp} If $X$ is doubling and supports a $(1,q)$ Poincaré inequality with exponent $\beta\leq 1$ for some $1\leq q<p$, and $\Gamma^*$ satisfies the $\mu$-arc-chord property with exponent $\beta$, then $M^{1,p}=N^{1,p}$, with equivalent norms.\end{thm}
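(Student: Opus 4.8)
The plan is to read the equality $M^{1,p}=N^{1,p}$ as a pair of continuous inclusions, one recalled immediately before the statement and the other being the preceding corollary; the theorem is essentially the synthesis of these two, matched at the level of norms. Throughout I keep in mind that the relevant Hajłasz space is $M^{\beta,p}$, i.e. the space $M^{1,p}$ built for the metric $d^\beta$, so that the two gradient notions in play (an upper gradient for $(X,d)$ in the Newton sense, and a Hajłasz gradient for $(X,d^\beta)$) are compatible.

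For the inclusion $N^{1,p}\embeds M^{\beta,p}$ I would simply invoke the embedding recalled just above the theorem: under the doubling hypothesis on $m$ together with the $(1,q)$-Poincaré inequality for some $q<p$, every $f\in N^{1,p}$ admits a Hajłasz gradient (for $d^\beta$) in $L^p$, with $\|\cdot\|_{M^{\beta,p}}\leq C\|\cdot\|_{N^{1,p}}$. The strict inequality $q<p$ plays its customary role here: the telescoping argument driven by the Poincaré inequality yields a pointwise estimate bounding $|f(x)-f(y)|$ by $d(x,y)^\beta$ times maximal functions of $g^q$, and since $q<p$ the operator $(M(\cdot)^q)^{1/q}$ is bounded on $L^p$, keeping the resulting gradient in $L^p$. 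This is the deeper of the two directions, but as it is stated (with references) I take it as given rather than reproving it.

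For the reverse inclusion I would apply the preceding corollary directly: since $\Gamma^*$ satisfies the $\mu$-arc-chord property with exponent $\beta$ and, because $\beta\leq 1$, continuous functions are dense in $M^{\beta,p}$, we obtain $M^{\beta,p}\embeds N^{1,p}$ with $\|\cdot\|_{N^{1,p}}\leq C\|\cdot\|_{M^{\beta,p}}$. Concretely, the lemma preceding that corollary converts the $d^\beta$-Hajłasz inequality of a continuous representative into a genuine upper gradient $Cg$, placing it in $N^{1,p}$; density of continuous functions together with the completeness of $N^{1,p}$ (already established) then extends this to all of $M^{\beta,p}$. This is where the paper's new hypothesis enters, since in Shanmugalingam's setting the reverse embedding holds without any such assumption.

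Combining the two gives $M^{1,p}=N^{1,p}$ as sets, together with $C^{-1}\|f\|_{M^{\beta,p}}\leq\|f\|_{N^{1,p}}\leq C\|f\|_{M^{\beta,p}}$, i.e. equivalent norms. I expect the only care points—none of them new—to be purely organizational: checking that the hypotheses of the two component results hold simultaneously (doubling plus $(1,q)$-Poincaré for the forward map, the $\mu$-arc-chord property plus $\beta\leq 1$ for the reverse), and keeping the $M^{\beta,p}$ versus $M^{1,p}$ bookkeeping straight so that the two norm estimates genuinely compose. I do not anticipate a further obstacle, since both directions have already been prepared and the theorem amounts to pairing their estimates.
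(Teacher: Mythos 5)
Your proposal is correct and takes essentially the same route as the paper: the theorem is stated there without a separate proof precisely because it is the synthesis of the recalled embedding $N^{1,p}\embeds M^{\beta,p}$ (doubling plus $(1,q)$-Poincar\'e, $q<p$) with the preceding corollary $M^{\beta,p}\embeds N^{1,p}$ (arc-chord property plus density of continuous functions for $\beta\leq 1$). Your bookkeeping of $M^{\beta,p}$ versus $M^{1,p}$ and of which hypotheses feed which direction is exactly the intended reading.
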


As in \cite{Sh}, we have the following versions of the classical Sobolev embedding theorems. In Shanmugalingam's case they are proven for $\beta=1$, but the same proof can be applied for other $\beta$ in our case.

\begin{thm}\label{sob1}If $m$ is doubling and satisfies
\begin{align*}m(B(x,r))\geq Cr^N\end{align*}
for $C,N$ independent of $x\in X, 0<r<2\diam (X)$, and if $X$ supports a $(1,p)$ Poincaré inequality of exponent $\beta\leq 1$ for $p>N/\beta$, then functions in $N^{1,p}$ are Lipschitz-$\alpha$ with $\alpha=\beta-N/p$.\end{thm}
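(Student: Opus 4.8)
The plan is to establish a Morrey-type estimate: I will show that for an appropriate representative one has $|f(x)-f(y)|\leq C\,d(x,y)^\alpha$ with $\alpha=\beta-N/p$, the positivity of $\alpha$ being exactly the hypothesis $p>N/\beta$. Fix $f\in N^{1,p}$ and, using the earlier proposition that upgrades a $p$-weak upper gradient to an honest one with arbitrarily close $L^p$-norm, choose an upper gradient $g\in L^p(m)$ of $f$ with $\|g\|_p$ finite and comparable to the $N^{1,p}$-norm. The whole argument runs through averages $f_B=\fint_B f\,dm$, the doubling property of $m$, the Poincar\'e inequality, and the lower mass bound $m(B(x,r))\geq Cr^N$, which is precisely what converts an $L^p$ bound on $g$ into a pointwise H\"older bound.

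First I would prove a telescoping estimate at a Lebesgue point $x$ of $f$. For $r>0$ set $B_n=B(x,2^{-n}r)$. Since $B_{n+1}\subset B_n$ and $m(B_n)\leq C\,m(B_{n+1})$ by doubling,
\begin{align*}
|f_{B_n}-f_{B_{n+1}}|\leq \fint_{B_{n+1}}|f-f_{B_n}|\leq C\fint_{B_n}|f-f_{B_n}|\leq C\,\diam(B_n)^\beta\Big(\fint_{\lambda B_n}g^p\,dm\Big)^{1/p}.
\end{align*}
Now $\diam(B_n)\leq 2\cdot 2^{-n}r$, and the lower mass bound gives $m(\lambda B_n)\geq C(2^{-n}r)^N$, so that $\big(\fint_{\lambda B_n}g^p\big)^{1/p}\leq \|g\|_p\,m(\lambda B_n)^{-1/p}\leq C\|g\|_p(2^{-n}r)^{-N/p}$. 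Hence each term is at most $C\|g\|_p(2^{-n}r)^{\beta-N/p}=C\|g\|_p(2^{-n}r)^\alpha$, and summing the resulting geometric series (convergent precisely because $\alpha>0$) yields $|f(x)-f_{B(x,r)}|\leq C\|g\|_p\,r^\alpha$; here I use that at a Lebesgue point $f(x)=\lim_n f_{B_n}$, and $r^\alpha\to 0$ guarantees this limit.

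Next I would compare two Lebesgue points $x,y$. Put $r=d(x,y)$ and let $B=B(x,2r)$, which contains both $B(x,r)$ and $B(y,r)$. Writing
\begin{align*}
|f(x)-f(y)|\leq |f(x)-f_{B(x,r)}|+|f_{B(x,r)}-f_B|+|f_B-f_{B(y,r)}|+|f_{B(y,r)}-f(y)|,
\end{align*}
the two outer terms are controlled by the telescoping estimate, while each inner term is bounded, using doubling to pass from the radius-$r$ ball to $B$ and then the Poincar\'e inequality together with the mass bound on $B$, by $C\|g\|_p r^\alpha$. Altogether $|f(x)-f(y)|\leq C\|g\|_p\,d(x,y)^\alpha$ for all Lebesgue points $x,y$. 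Since $m$ is doubling, $m$-almost every point is a Lebesgue point, so the estimate holds on a set of full measure; being uniformly continuous there, $f$ coincides $m$-a.e.\ with a unique Lipschitz-$\alpha$ function on all of $X$, which is the desired representative. Taking the infimum over upper gradients $g$ also records the quantitative control of the Lipschitz-$\alpha$ seminorm by $\|f\|_{N^{1,p}}$.

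The main obstacle I anticipate is not any single inequality but the careful bookkeeping that makes the exponent come out as $\alpha=\beta-N/p$: one must combine the factor $\diam(B_n)^\beta$ from the Poincar\'e inequality with the factor $(2^{-n}r)^{-N/p}$ coming from the lower mass bound, and verify that the geometric series converges exactly under $p>N/\beta$. A secondary point requiring care is the passage from the pointwise bound valid only at Lebesgue points to a genuine everywhere-defined Lipschitz-$\alpha$ representative, that is, the identification of the correct representative of the $N^{1,p}$-equivalence class.
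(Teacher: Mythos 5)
Your proposal is correct and takes essentially the same route as the paper's proof: the paper does not write the argument out, deferring to Shanmugalingam's proof with the remark that it carries over with exponent $\beta$ in place of $1$, and that proof is precisely the Morrey-type telescoping estimate (Poincar\'e inequality plus doubling plus the lower mass bound, summed over the balls $B(x,2^{-n}r)$) that you carry out, with the exponent bookkeeping giving $\alpha=\beta-N/p>0$ exactly when $p>N/\beta$. The only point you gloss over is that the Lipschitz-$\alpha$ extension obtained from the Lebesgue points must itself be checked to lie in $\tilde{N}^{1,p}$ so that it represents the same equivalence class; this is routine from its continuity, the fact that $Mod_p(\Gamma_E^+)=0$ for the $m$-null set $E$ where it differs from $f$, and approximation of the endpoints of a curve by points outside $E$ along that curve.
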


\begin{thm}\label{sob2}If $X$ is bounded and satisfies
\begin{align*}cr^N\leq m(B(x,r))\leq Cr^N\end{align*}
with $c,C,N$ independent of $x\in X, 0<r<2\diam (X)$ (i.e. $X$ is Ahlfors $N$-regular), and if $X$ supports a $(1,q)$ Poincaré inequality of exponent $\beta$ for $q>1/\beta$, then for $p$ satisfying $q<p<Nq$, $\frac{1}{p^*}=\frac{1}{p}-\frac{1}{Nq}$ we have that every $f\in N^{1,p}$ with upper gradient $g$,
\begin{align*}\|u-u_X\|_{p^*}\leq C \diam(X)^{\beta-1/q}\|g\|_p.\end{align*}\end{thm}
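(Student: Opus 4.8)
The plan is to follow the classical Hajłasz--Koskela / Shanmugalingam route: convert the $(1,q)$-Poincar\'e inequality into a pointwise estimate on $|f-f_X|$ via a telescoping chain of concentric balls, and then recognize the resulting sum as a power of a Riesz potential of $g^q$, whose $L^{p/q}\to L^{p^*/q}$ mapping property delivers the global inequality. Throughout I write $D=\diam(X)$ and $f_X=\fint_X f\,dm$ (so $u=f$ and $u_X=f_X$ in the statement), and I use that an Ahlfors $N$-regular $m$ is doubling, so $m$-a.e.\ point is a Lebesgue point of $f\in L^1(X)$.

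First I would fix a Lebesgue point $x$ and the chain $B_i=B(x,r_i)$ with $r_i=2^{1-i}D$, so that $B_0\supset X$ and hence $f_{B_0}=f_X$, while $f_{B_i}\to f(x)$. Telescoping and estimating each increment by doubling and the $(1,q)$-Poincar\'e inequality (using $\diam(B_i)^\beta\lesssim r_i^\beta$) gives, for $m$-a.e.\ $x$,
\[
|f(x)-f_X|\le C\sum_{i=0}^\infty r_i^\beta\Big(\fint_{\lambda B_i} g^q\,dm\Big)^{1/q}.
\]
This is the routine part.

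The main obstacle is the $q$-th root sitting outside the averages: it blocks a direct identification of the sum with a linear potential of $g^q$. I would resolve this exactly as the target exponents dictate, by applying H\"older's inequality in the summation index $i$ after splitting off a geometric factor. Writing $r_i^\beta(\fint_{\lambda B_i}g^q)^{1/q}=r_i^{\beta-1/q}\,(r_i\fint_{\lambda B_i}g^q)^{1/q}$ and using H\"older with exponents $q'$ and $q$,
\[
\sum_{i}r_i^\beta\Big(\fint_{\lambda B_i}g^q\Big)^{1/q}\le\Big(\sum_i r_i^{(\beta-1/q)q'}\Big)^{1/q'}\Big(\sum_i r_i\fint_{\lambda B_i}g^q\Big)^{1/q}.
\]
The hypothesis $q>1/\beta$ makes $\beta-1/q>0$, so the first (geometric) sum converges and is comparable to $D^{\beta-1/q}$; the second sum, by the Ahlfors regularity $m(\lambda B_i)\sim r_i^N$ and a rearrangement of the series (legitimate because $1<N$), is dominated by the Riesz potential $I_1(g^q)(x)=\int_X d(x,y)^{1-N}g(y)^q\,dm(y)$. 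This yields $|f(x)-f_X|\le C D^{\beta-1/q}\big(I_1(g^q)(x)\big)^{1/q}$, where the exponent $\beta-1/q$ already matches the claim and the potential has order exactly $1$.

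Finally I would take $L^{p^*}$ norms and invoke boundedness of the order-one fractional integral $I_1:L^{p/q}\to L^{p^*/q}$ on an Ahlfors $N$-regular space, where $\tfrac{q}{p^*}=\tfrac qp-\tfrac1N$; this is where the two-sided condition $q<p<Nq$ enters, ensuring $1<p/q<N$ and hence sub-criticality. Then
\[
\|f-f_X\|_{p^*}\le C D^{\beta-1/q}\,\|I_1(g^q)\|_{p^*/q}^{1/q}\le C D^{\beta-1/q}\,\|g^q\|_{p/q}^{1/q}=C\,\diam(X)^{\beta-1/q}\|g\|_p,
\]
as desired. The only genuinely delicate point is the H\"older-in-$i$ step with the sharp split $\epsilon=1/q$, which simultaneously produces the correct power of $\diam(X)$ and a potential of the correct order; everything else is bookkeeping with doubling, Ahlfors regularity, and classical fractional integration. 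Note that the $\mu$-arc-chord property is not needed here, since the argument uses only the Poincar\'e inequality and the upper gradient $g$ as black boxes.
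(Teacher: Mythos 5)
Your proposal is correct, and it is essentially the proof the paper itself relies on: the paper gives no argument of its own but defers to Shanmugalingam's (Haj\l asz--Koskela style) proof, which is precisely your telescoping-chain plus Riesz-potential argument, with the exponent $\beta$ and the hypothesis $q>1/\beta$ entering exactly where you use them (the convergence of the geometric factor producing $\diam(X)^{\beta-1/q}$). The remaining ingredients --- the pointwise bound by $\bigl(I_1(g^q)\bigr)^{1/q}$ and the $L^{p/q}\to L^{p^*/q}$ boundedness of $I_1$ on a bounded Ahlfors $N$-regular space, with the exponent bookkeeping $q/p^*=q/p-1/N$ --- are the standard ones and check out.
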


We finish this work with the example $X=\RR^n$ with Euclidean distance, $d\mu=\omega d\HH^1$, $dm=\omega^pdx$ where $\omega$ and $\frac{1}{\omega}$ are locally integrable. First we consider when a Poincaré inequality holds.

If $\omega$ is bounded, as Poincaré inequality is true for $dx$, we get
\begin{align*}\fint_B|f-f_B|dm &\leq \fint_B|f-f_{B,dx}|dm+|f_B-f_{B,dx}|\\
&\leq 2\fint_B|f-f_{B,dx}|dm\\
&\leq 2\left(\fint_B|f-f_{B,dx}|^pdm\right)^{1/p}\\
& \leq 2\left(\frac{|B|}{m(B)}\right)^{1/p}\left(\frac{1}{|B|}\int_B |f-f_{B,dx}|^p\omega^pdx\right)^{1/p}\\
&\leq C\left(\frac{|B|}{m(B)}\right)^{1/p}\|\omega\|_\infty \diam(B)\left(\frac{1}{|B|}\int_B|\nabla f|^p dx\right)^{1/p}\\
&= C\|\omega\|_\infty\diam(B)\left(\fint_B\left(\frac{|\nabla f|}{\omega}\right)^p dm\right)^{1/p},\end{align*}
where $f_{B,dx}=\fint_B fdx$.

Instead of asking for $\omega$ to be bounded, we may use a two-weight Poincaré inequality as found in \cite{Hr}. Let $1<p<n$, $\omega^p\in A_\infty$ and
\begin{align*}\frac{1}{|Q|^{q(\frac{1}{p}-\frac{1}{n})}}\int_Q \omega^pdx\leq C\end{align*}
for each cube $Q$, with $C$ independent of $Q$, and some $q$ such that $\frac{1}{p}-\frac{1}{n}\leq \frac{1}{q}<\frac{1}{p}$. If $p=q=2$ this would be Fefferman-Phong's condition (see \cite{FP}). 

In our case, the pair $1,\omega^p$ satisfies condition $A^{1/n}_{p,q}$, where we say two weights $w_1,w_2$ satisfy condition $A^\alpha_{p,q}$ if there exists $C>0$ such that
\begin{align*}\left(\int_Q w_1^{-p'/p}\right)^{1/p'}\left(\int_Q w_2\right)^{1/q}\leq C|Q|^{1-\alpha}\end{align*}
for each cube $Q$, for $0\leq\alpha<1$, $1<p,q<\infty$, $1/p-\alpha\leq 1/q$.

E. Harboure proves in \cite{Hr} that these conditions imply there exists constants $C>0$ and $\delta>0$ (depending on the $A_\infty$ and $A^{1/n}_{p,q}$ constants) such that the following Poincaré inequality holds
\begin{align*}\int_Q |f-f_{Q,dx}|^p\omega^p dx\leq C \left(\int_Q \omega^pdx\right)^\delta \int_Q|\nabla f|^pdx.\end{align*}
From this condition, our $(1,p)$ Poincaré inequality follows,
\begin{align*}\fint_Q|f-f_Q|dm &\leq C\frac{1}{m(Q)^{1/p}}\left(\int_Q |f-f_{Q,dx}|^p\omega^pdx\right)^{1/p}\\
& \leq C\frac{1}{m(Q)^{1/p}}\left(m(Q)^\delta \int_Q \left(\frac{|\nabla f|}{\omega}\right)^pdm\right)^{1/p}\\
& = C m(Q)^{\delta/p}\left(\fint_Q \left(\frac{|\nabla f|}{\omega}\right)^pdm\right)^{1/p}\\
& \leq C\diam(Q)^\beta \left(\fint_Q \left(\frac{|\nabla f|}{\omega}\right)^pdm\right)^{1/p},
\end{align*}
for $\beta=\frac{\delta q}{p}\left(\frac{n}{p}-1\right)$, where the last inequality follows from the fact that, by our assumption, as $dm=\omega^pdx$, 
\begin{align*}m(Q)=\int_Q\omega^pdx\leq C|Q|^{q\left(\frac{1}{p}-\frac{1}{n}\right)}=C\diam(Q)^{q(n/p-1)}.\end{align*}

As an example of such $\omega$, we may consider $\omega(x)=\frac{1}{|x|^\lambda}$, for some $0\leq \lambda<1$. Then $\omega^p\in A_\infty$ if $p\lambda <n$ and the pair $1,\omega^p$ satisfies condition $A^{1/n}_{p,q}$ for $q=\frac{n-\lambda p}{n-p}p$: for $Q=Q(0,R)$,
\begin{align*}\frac{1}{|Q|^{q(1/p-1/n)}}\int_Q \omega^pdx=CR^{-q\frac{n-p}{p}}\int_{Q(0,R)}\frac{1}{|x|^{\lambda p}}dx\sim R^{-q\frac{n-p}{p}}R^{n-\lambda p}=C\end{align*}
and for $Q=Q(x_0,R)$ with $x_0\neq 0$, we consider two cases. If $2R>|x_0|$, then $Q(x_0,R)\subset Q(0,3R)$, so
\begin{align*}\frac{1}{|Q|^{q(1/p-1/n)}}\int_Q \omega^pdx\leq CR^{-q\frac{n-p}{p}}\int_{Q(0,3R)}\frac{1}{|x|^{\lambda p}}dx\leq C;\end{align*}
on the other hand if $2R\leq |x_0|$, then for $x\in Q$ we have $|x|\sim |x_0|$, so
\begin{align*}\frac{1}{|Q|^{q(1/p-1/n)}}\int_Q \omega^pdx\sim R^{-q\frac{n-p}{p}}\frac{1}{|x_0|^{\lambda p}}R^n\leq C.\end{align*}

As a special case, we can consider $\lambda=0$, so the weight $\omega=1$, which gives classical Sobolev spaces $W^{1,p}$, is included in our result.

With Poincaré inequality, theorems \ref{sob1} and \ref{sob2} hold, provided the other conditions are met. We also obtain one half of theorem \ref{mp}, as a Poincaré inequality is sufficient to obtain $N^{1,p}\embeds M^{\beta,p}$.
%

If there exists $c>0$ such that $\omega(x)\geq c$ for all $x$, we also get the arc-chord property,
\begin{align*}\diam(Im(\gamma)) &\leq \HH^1(Im(\gamma))=\int_{Im(\gamma)}d\HH^1\\
&\leq \frac{1}{c}\int_{Im(\gamma)}\omega d\HH^1=\frac{1}{c}\mu(Im(\gamma)).\end{align*}

For example, the weight $\omega(x)=\frac{1}{|x|^\lambda}$ satisfies this restriction if $\lambda=0$ or if $X=Q_0$ for some fixed cube $Q_0$, here we consider only cubes $Q\subset Q_0$ (that may contain the origin, so $\omega$ is not necessarily bounded), and as it also satisfies the $A^{1/n}_{p,q}$ condition restricted to those cubes. This case allows for both a Poincaré inequality and an arc-chord property, even though the exponents in each case may not coincide.

\textit{E-mail address:} \texttt{mmarcos@santafe-conicet.gov.ar}

Instituto de Matemática Aplicada del Litoral, CONICET, UNL.

CCT CONICET Santa Fe, Predio ``Alberto Cassano'', Colectora Ruta Nac. 168 km 0, Paraje El Pozo, 3000 Santa Fe, Argentina.

\end{document}